\newcommand{\mysection}[1]{\section{#1}
      \setcounter{equation}{0}}
\newcommand\cbrk{\text{$]$\kern-.15em$]$}} 
\newcommand\opar{\text{\raise.2ex\hbox{${\scriptstyle | }$}\kern-.34em$($} }
\newtheorem{theorem}{Theorem}[section]
\newtheorem{lemma}[theorem]{Lemma}
\newtheorem{proposition}[theorem]{Proposition}
\newtheorem{corollary}[theorem]{Corollary}
\theoremstyle{definition}
\newtheorem{assumption}{Assumption}[section]
\newtheorem{definition}{Definition}[section]
\theoremstyle{remark}
\newtheorem{remark}{Remark}[section]
\newcommand{\F}{\mathcal{{F}}}
\newcommand\bR{\mathbb{R}}
\newcommand\bP{\mathbb{P}}
\newcommand\bM{\mathbb{M}}
\newcommand\bQ{\mathbb{Q}}
\newcommand\frZ{\mathfrak{Z}}
\newcommand\frz{\mathfrak{z}}
\newcommand\cB{\mathcal{B}}
\newcommand\cF{\mathcal{F}}
\newcommand\cG{\mathcal{G}}
\newcommand\cH{\mathcal{H}}
\newcommand\cL{\mathcal{L}}
\newcommand\cM{\mathcal{M}}
\newcommand\cP{\mathcal{P}}
\newcommand\cO{\mathcal{O}}
\newcommand\cY{\mathcal{Y}}
\newcommand\cZ{\mathcal{Z}}
\newcommand{\E}{\mathbb{{E}}}
\newcommand{\1}{\mathbbm{1}}
\newcommand{\Ntn}{\tilde{N}_{0}}
\newcommand{\Nte}{\tilde{N}_{1}}
\newcommand{\vp}{\varphi}
\newcommand{\bRYn}{{\mathbb{R}^{d'}\setminus\{0\}}}
 \newcommand{\sumstar}
 {\operatornamewithlimits{\sum@\kern-.2em\raise1ex\hbox{*}}}
\begin{document}

\author[F. Germ]{Fabian Germ}
\address{School of Mathematics and Maxwell Institute, University of Edinburgh, Scotland, United Kingdom}
\email{F.Germ@ed.ac.uk}

\author[I. Gy\"ongy]{Istv\'an Gy\"ongy}
\address{School of Mathematics and Maxwell Institute, 
University of Edinburgh, Scotland, United Kingdom.}
\email{i.gyongy@ed.ac.uk}

\keywords{Nonlinear filtering, random measures, L\'evy processes}

\subjclass[2020]{Primary  	60G35,  60H15 ; Secondary 60G57, 60G51}

\begin{abstract} 
This paper is the first part of a series of papers on filtering for partially observed 
jump diffusions satisfying a stochastic differential equation driven by Wiener processes 
and  Poisson martingale measures. The coefficients of the equation only
satisfy appropriate growth conditions. 
Some results in filtering theory of diffusion processes are extended to jump diffusions  
and equations for the time evolution of the conditional distribution 
and the unnormalised conditional distribution of the unobserved process at time $t$, 
given the observations until $t$, are presented. 
\end{abstract}

\title[The Filtering equations]{On partially observed jump diffusions I. The Filtering equations
}

\maketitle


\mysection{Introduction}

This is the first part of a series of papers on filtering of jump-diffusions. 
We consider on a given complete 
filtered probability space $(\Omega,\F,(\F_t)_{t\geq 0},P)$ 
a $d+d'$-dimensional stochastic process $(Z_t)_{t\in[0,T]}=(X_t,Y_t)_{t\in[0,T]}$, 
satisfying the stochastic differential equation
\begin{equation}                                                                          
\begin{split}
    dX_t    &= b(t,Z_t)dt + \sigma(t,Z_t)dW_t + \rho(t,Z_t)dV_t\\
            &+\int_{\frZ_0}\eta(t, Z_{t-},\frz)\,\tilde N_0(d\frz,dt) 
            + \int_{\frZ_1}\xi(t,Z_{t-},\frz)\,\tilde N_{1}(d\frz,dt)\\
    dY_t    &=B(t,Z_t)dt + dV_t + \int_{\frZ_1} \frz\,\tilde N_{1}(d\frz, dt),
    \end{split}                                                                                             \label{system_1}
\end{equation} 
on the interval $[0,T]$ for a given $\cF_0$-measurable initial value $Z_0=(X_0,Y_0)$, 
where 
$(W_t,V_t)_{t\geq0}$ is $d_1+d'$-dimensional $\cF_t$-Wiener process, 
and $\tilde N_i(d\frz,dt) = N_i(d\frz,dt)-\nu_i(d\frz)dt$, $i=0,1$, are independent 
$\F_t$-Poisson martingale measures 
on $\bR_{+}\times\frZ_i$ 
with $\sigma$-finite characteristic measures $\nu_0$ and $\nu_1$ 
on separable measurable spaces 
$(\frZ_0,\cZ_0)$ and $(\frZ_1,\cZ_1)=(\bRYn,\cB(\bRYn))$, respectively. 
The mappings $b=(b^i)$, $B=(B^i)$, $\sigma=(\sigma^{ij})$ and $\rho=(\rho^{il})$ 
are Borel functions of $(t,z)=(t,x,y)\in\bR_+\times\bR^{d+d'}$, 
with values in $\bR^d$, $\bR^{d'}$, $\bR^{d\times d_1}$ 
and $\bR^{d\times d'}$, respectively, and $\eta=(\eta^i)$ and $\xi=(\xi^i)$ are 
$\bR^d$-valued $\cB(\bR_+\times\bR^{d+d'})\otimes\cZ_0$-measurable and 
$\bR^d$-valued $\cB(\bR_+\times\bR^{d+d'})\otimes\cZ_1$-measurable functions of  
$(t,z,\frz_0)\in\bR_+\times\bR^{d+d'}\times \frZ_0$ 
and $(t,z,\frz_1)\in\bR_+\times\bR^{d+d'}\times \frZ_1$, 
respectively.  

We are concerned with the classic task of filtering theory: to calculate   
at each time $t$ the mean square estimate of $f(X_t)$, a real-valued Borel function  
of the ``unobservable component" $X_t$ of $Z_t$, given 
the ``observations" $\{Y_s:s\leq t\}$. Since, as it is well-known,  this estimate is 
the conditional expectation 
$$
\E (f(X_t)|Y_s:s\leq t)=\int_{\bR^d}f(x)P_t(dx), 
\quad t\in[0,T], 
$$
we are interested in equations for the evolution of $P_t(dx)$, 
the conditional distribution of $X_t$ given $\{Y_s,s\leq t\}$. 
Their derivation for a large class of coefficients is the aim of this paper. 
In the subsequent papers of this series we  investigate the existence of the conditional density $\pi_t(x)=P_t(dx)/dx$ and its regularity 
properties.  

There has been an immense interest in the development of filtering theory 
due to its wide applicability in various disciplines, be they of theoretical or applied nature. 
A vast amount of research has been done on filtering of partially observed processes 
governed by stochastic differential equations driven by Wiener processes, i.e., when 
$\eta=\xi=0$ in \eqref{system_1}, 
and a quite complete nonlinear filtering theory was built up, see for instance \cite{DC2014} 
for a historical account.
\newline
In this case it is well-known that $(P_t(dx))_{t\geq0}$ 
satisfies a nonlinear stochastic PDE (SPDE), often called the Kushner-Shiryayev 
equation in filtering theory. It is also well-known that this equation 
can be transformed into a linear SPDE, called Zakai equation, or Duncan-Mortensen-Zakai 
equation for $\mu_t(dx)=\lambda_tP_t(dx)$, the unnormalised conditional distribution, 
where $(\lambda_t)_{t \in[0,T] }$ is a positive normalizing stochastic process. 

There exist several known methods of deriving the filtering equations for partially 
observed diffusion processes,  three prominent of which are the ``innovation method", 
the ``reference measure method" and a ``direct approach". The innovation method 
 is based on ``innovation process" representations, (see \cite{LS1974} 
 and \cite{FKK1972}), and the 
direct approach is based on suitable existence and uniqueness theorems 
for stochastic PDEs (see \cite{KZ2000}). 
The reference probability method is employed in this paper, where we make use of the 
fact that by Girsanov's theorem one can introduce a new measure under which 
the observation $\sigma$-algebra, $\sigma(Y_s:s\leq t)$, is the product $\sigma$-algebra 
of three independent $\sigma$-algebras: the $\sigma$-algebra generated by 
the initial observation $Y_0$ and the $\sigma$-algebras generated by  
 the Wiener process and the Poisson random measure in the observation process 
until time $t$, respectively. This structure of the observation $\sigma$-algebra makes it possible to 
calculate conditional expectations of functions of the process $Z$ given the observations. 
(See, e.g., \cite{CCC2014} for descriptions of various methods used in filtering theory.)

Recently, also filtering for jump-diffusion systems have been intensively studied, 
which are most often modelled as SDEs driven by Wiener processes 
as well as random jump measures, 
a classical case of which are Poisson random measures.
 In an early article thereon, \cite{P2005}, the filtering equations 
 were derived for uncorrelated continuous observations, as well 
 as an observation process driven only by a jump process that has 
 no common jumps with the signal. A similar system with continuous 
 uncorrelated observations has also been considered in \cite{SS2009}. 
 A more general nonlinear system with jumps in the observation process was considered in \cite{B2014}. In \cite{A} the filtering equations for a large class of uncorrelated linear systems with jumps are derived. 
In \cite{GM2011} a very general model is considered and a representation 
for optional projection of the signal process 
is derived. However, due to the generality 
a number of additional assumptions  are imposed on their model
and equations for the filtering measures are not obtained. 
\newline
In \cite{CC2012} and \cite{CC2014} 
the authors deal with a one-dimensional jump-diffusion where observation 
and signal may have common jumps by introducing a new random measure, 
nonzero only for observable jumps, relying on a construction in \cite{C2006}. 
However, they impose a finiteness condition on the support of the integrand in front of the jump term, which translates to observing only finitely many jumps almost surely. 
In such a case, the jump\ measure and the associated compensator, also referred to as dual predictable projection, allow for a specific decomposition, see for instance \cite[Sec. XI.4]{HWY}. The filtering equations have been derived for a class 
of jump diffusion systems \cite{QD2015}, later generalised 
to include correlated Wiener process noises in \cite{Q2019}, 
however it seems to us that 
certain important results needed for this derivation, 
including Lemma 3.2 in \cite{QD2015}, also used in \cite{Q2019}, 
do not hold, for instance if one considers the case of vanishing coefficients. 
A model where a correlation structure between 
the L\'evy process noises in signal and observation is described using copulas is used in \cite{FH2018} to derive 
the Zakai equation.\newline

In this paper we obtain  the filtering equations 
for a jump-diffusion system driven by correlated Wiener process, 
as well as correlated Poisson martingale measure noises. 
We impose common  linear growth conditions. 
We do not assume any non-degeneracy conditions  
and allow for the number of jumps in any component  
of $(Z_t)_{t\geq0}$ to be infinite over finite intervals. 
In order to obtain the equations, we generalise some 
results from filtering theory and in particular prove a 
projection theorem for a wide class of functions. \newline
In Section \ref{sec main results} a fairly general 
condition for Girsanov's transformation and our main result are presented. 
In Section \ref{section preliminaries} a projection theorem covering 
a wide class of processes is proven,  and thereby in the last section 
the filtering equations are derived. 
\newline
Conditions and results on the existence and 
regularity of the filtering density are presented 
in subsequent articles of this series.

We conclude with some notions and notations 
used throughout the paper. 
For an integer $n\geq0$ the notation $C^n_b(\bR^d)$ means 
the space of real-valued bounded continuous functions on $\bR^d$, 
which have bounded and continuous derivatives up to order $n$. 
(If $n=0$, then $C^0_b(\bR^d)=C_b(\bR^d)$ denotes the space of 
real-valued bounded continuous functions on $\bR^d$). 
We denote by  $\bM=\bM(\bR^d)$ the set of finite Borel measures 
on $\bR^d$. For $\mu\in\bM$ we use the notation 
$$
\mu(\varphi)=\int_{\bR^d}\varphi(x)\,\mu(dx) 
$$
for Borel functions $\varphi$ on $\bR^d$. 
We say that a function $\nu:\Omega\to\bM$ is $\cG$-measurable 
for a $\sigma$-algebra $\cG\subset\cF$, if $\nu(\varphi)$ is a  
$\cG$-measurable random variable for every bounded Borel function 
$\varphi$ on $\bR^d$. 
An $\bM$-valued stochastic process $\nu=(\nu_t)_{t\in[0,T]}$ 
is said to be weakly cadlag if almost surely 
$\nu_t(\varphi)$ is a cadlag function of $t$ for all $\varphi\in C_b(\bR^d)$. 
For such a process $\nu$ there is a set 
$\Omega'\subset\Omega$ of full probability and there is uniquely defined 
(up to indistinguishability)   $\bM$-valued processes 
$(\nu_{t-})_{t\in(0,T]}$ 
such that for every $\omega\in\Omega'$ 
$$
\nu_{t-}(\varphi)=\lim_{s\uparrow t}\nu_{s}(\varphi)
\quad\text{for all $\varphi\in C_b(\bR^d)$
and $t\in(0,T]$,}
$$
and for each $\omega\in\Omega'$ we have $\nu_{t-}=\nu_t$, 
for all but at most countably many $t\in(0,T]$. 
For processes $U=(U_t)_{t\in [0,T]}$ we use the notation
$
\cF_t^{U}
$
for the $P$-completion of the $\sigma$-algebra generated by 
$\{U_s: s\leq t\}$.  For a  measure space 
$(\frZ,\cZ,\nu)$ and $p\geq1$ we use the notation 
$L_p(\frZ)$ for the $L_p$-space of $\bR^d$-valued 
$\cZ$-measurable processes defined on $\frZ$.  
For $\sigma$-algebras $\cG_i\subset\cF$, $i=1,2$,  the notation 
$\cG_1\vee\cG_2$ means the $P$-completion of the smallest 
$\sigma$-algebra containing $G_i$ for $i=1,2$.
Finally, we always use without mention the summation convention, 
by which repeated integer valued indices imply a summation.

\mysection{Formulation of the main results}
\label{sec main results}

We consider on a given finite interval $[0,T]$ a $d+d'$-dimensional 
stochastic process  $Z=(Z_t)_{t\in[0,T]}=(X_t,Y_t)_{t\in[0,T]}$  
carried by a complete probability space $(\Omega,\cF,P)$, 
equipped with a filtration $(\cF_t)_{t\geq0}$ such that $\cF_0$ 
contains the $P$-null sets of $\cF$. 
We assume that $Z$ satisfies the stochastic differential equation \eqref{system_1} 
on the interval $[0,T]$, with an $\cF_0$-measurable initial value $Z_0=(X_0,Y_0)$. 

Besides the natural measurability conditions  on 
the coefficients $b$, $\sigma$, $\rho$, $\xi$, $\eta$ and $B$, described in the Introduction, 
we assume the following conditions. 

\begin{assumption}                                                    \label{assumption SDE}
\begin{enumerate}
\item[(i)]
There are nonnegative constants $K_0$, $K_1$ and $K_2$ 
such that 
$$
|b(t,z)|^2 \leq K_0+K_1|z|^2, 
\quad 
|\sigma(t,z)|^2+ |\rho(t,z)|^2+|B(t,z)|^2\leq K_0+K_2|z|^2,
$$
$$
|\eta(t,z)|^2_{L_2(\frZ_0)}+  
|\xi(t,z)|^{2}_{L_2(\frZ_1)}
\leq K_0+K_2|z|^2, \quad 
\int_{\frZ_1}|\frz|^2\nu_1(d\frz)\leq K_0 
$$
for all $z=(x,y)\in\mathbb{R}^{d+d'}$ and  $t\in[0,T]$,  
and we have 
\item[(ii)] 
\begin{equation}                                                                \label{moments}
K_1\E|X_0|+K_2\E|X_0|^2<\infty. 
\end{equation}
\end{enumerate}
\end{assumption}

Note that in \eqref{moments} we use 
the convention that $0\times\infty=0$, i.e., 
if $K_2=0$, then the finiteness of the second moment of 
$|X_0|$ is not required, and if $K_1=K_2=0$ then 
Assumption \ref{assumption SDE} (ii) clearly holds.

The following moment estimate is known and can be easily proved by the 
help of well-known martingale inequalities. 
\begin{remark} If Assumption \ref{assumption SDE}(i) holds, then 
for every $p\in[1,2]$ and $A\in\cF_0$ we have 
\begin{equation}                                                              \label{bound_Z}
\E\sup_{t\leq T}{\bf1}_A|Z_t|^p\leq N(1+\E{\bf1}_A|Z_0|^p)
\end{equation}
with a constant $N$ depending only on 
$p$, $T$, $K_0$, $K_1$, $K_2$ and $d+d'$.
\end{remark}

We make also the following assumption. 

\begin{assumption}                                                                              \label{assumption Girsanov}
We have $\E\gamma_T=1$, where 
\begin{equation}                                                                                   \label{gamma}
\gamma_t
=\exp\left(
-\int_0^tB(s,X_s,Y_s)\,dV_s-\tfrac{1}{2}\int_0^t|B(s,X_s,Y_s)|^2\,ds
\right), 
\quad t\in[0,T].
\end{equation}
\end{assumption}                                                                                 
This assumption implies that the measure $Q$, 
defined by $dQ=\gamma_TdP$ 
on $\cF$, is a probability measure, 
and hence by Girsanov's theorem under $Q$
the process 
\begin{equation}                                                                                      \label{tilde Wiener}
\tilde V_t=\int_0^tB(s,X_s,Y_s)\,ds+V_t,\quad t\in[0,T], 
\end{equation}
is an $\cF_t$-Wiener process. 

To describe the evolution of the conditional distribution 
$P_t(dx)=P(X_t\in dx|Y_s,s\leq t)$ 
for $t\in[0,T]$, we introduce 
the random differential operators 
$$
\cL_t=a^{ij}_t(x)D_{ij}+b^i_t(x)D_i, 
\quad 
\cM^k_t=\rho_t^{ik}(x)D_i+B^k_t(x), \quad k=1,2,...,d', 
$$
where 
$$
a^{ij}_t(x):=\tfrac{1}{2}\sum_{k=1}^{d_1}(\sigma^{ik}_t\sigma^{jk}_t)(x)
+\tfrac{1}{2}\sum_{l=1}^{d'}(\rho^{il}_t\rho_t^{jl})(x), 
\quad\sigma_t^{ik}(x):=\sigma^{ik}(t,x,Y_t),\quad
\rho_t^{il}(x):=\rho^{il}(t,x,Y_t), 
$$
$$
b^i_t(x):=b^i(t,x,Y_t),
\quad 
B^k_t(x):=
B^k(t,x,Y_t)
$$
for $\omega\in\Omega$, $t\in[0,T]$, $x=(x^1,...,x^d)\in\bR^d$, 
and $D_i=\partial/\partial x^i$, 
$D_{ij}=\partial^2/(\partial x^i\partial x^j)$ for $i,j=1,2...,d$. 
Moreover for every $t\in[0,T]$ and $\frz \in \frZ_1$ 
we introduce the random operators $I_t^{\xi}$ and $J_t^{\xi}$ defined by 
\begin{equation}                                                                      \label{IJ}                                                       
I_t^{\xi}\varphi(x,\frz)=\varphi(x+\xi_t(x,\frz), \frz)-\varphi(x,\frz), 
\quad
J_t^{\xi}\phi(x, \frz)=I_t^{\xi}\phi(x, \frz)-\sum_{i=1}^d\xi_t^i(x,\frz)D_i\phi(x,\frz)
\end{equation}
for functions $\varphi=\varphi(x,\frz)$ and $\phi=\phi(x,\frz)$ of 
$x\in\bR^d$ and $\frz\in\frZ_1$, 
and furthermore the random operators 
$I_t^{\eta}$ and $J_t^{\eta}$, defined as $I_t^{\xi}$ and $J_t^{\xi}$, respectively, with 
$\eta_t(x,\frz)$ in place of $\xi_t(x,\frz)$, where 
$$
\xi_t(x,\frz_{1}):=\xi(t,x,Y_{t-},\frz_{1}),
\quad
\eta_t(x,\frz_{0}):=\eta(t,x,Y_{t-},\frz_{0})
$$
for $\omega\in\Omega$, $t\in[0,T]$, 
$x\in\bR^d$ and $\frz_i\in\frZ_i$ for $i=0,1$ 
($Y_{0-}:=Y_0$). 

Now we are in the position to formulate our main result. 
Recall that we denote by $(\cF^Y_t)_{t\in[0,T]}$ the completed 
filtration generated by $(Y_t)_{t\in[0,T]}$. 
\begin{theorem}                                                                                                  \label{theorem Z1}
Let Assumptions \ref{assumption SDE} and  \ref{assumption Girsanov} hold. 
Then there exist measure-valued $\cF^Y_t$-adapted weakly cadlag processes 
$(P_t)_{t\in[0,T]}$ and $(\mu_t)_{t\in[0,T]}$ 
such that 
$$
P_t(\varphi)=\mu_t(\varphi)/\mu_t({\bf 1}),\quad \text{for $\omega\in\Omega,\,\, t\in[0,T]$}, 
$$
$$
P_t(\varphi)=\E(\varphi(X_t)|\cF^Y_t),\quad \mu_t(\varphi)=\E_{Q}(\gamma_t^{-1}\varphi(X_t)|\cF^Y_t) 
\quad\text{(a.s.) for each $t\in[0,T]$}, 
$$
for bounded Borel functions $\varphi$ on $\bR^d$, 
and for every $\varphi\in C^{2}_b(\bR^d)$ almost surely \begin{equation}
\begin{split}
\mu_t(\vp)=&  \mu_0(\vp) +  \int_0^t\mu_{s}(\cL_s\varphi)\,ds
+ \int_0^t \mu_{s}(\cM_s^k\varphi)\,d\tilde V^k_s
+ \int_0^t\int_{\frZ_0}\mu_{s}(J_s^{\eta}\varphi)\,\nu_0(d\frz)ds\\ 
&+ \int_0^t\int_{\frZ_1}\mu_{s}(J_s^{\xi}\varphi)\,\nu_1(d\frz)ds
+\int_0^t\int_{\frZ_1}\mu_{s-}(I_s^{\xi}\varphi)\,\tilde N_1(d\frz,ds), 
\end{split}
                                                                                                                                     \label{eqZ1}
\end{equation}
and
\begin{equation}
\begin{split}
P_t(\vp)=&  P_0(\vp) +  \int_0^tP_{s}(\cL_s\varphi)\,ds
+ \int_0^t \big(P_{s}(\cM_s^k\varphi)-P_{s}(\varphi)P_s(B^k_s)\big)\,d\bar V^k_s\\ 
&+ \int_0^t\int_{\frZ_0}P_{s}(J_s^{\eta}\varphi)\,\nu_0(d\frz)ds
+ \int_0^t\int_{\frZ_1}P_{s}(J_s^{\xi}\varphi)\,\nu_1(d\frz)ds\\
&+\int_0^t\int_{\frZ_1}P_{s-}(I_s^{\xi}\varphi)\,\tilde N_1(d\frz,ds)\\
\end{split}
                                                                                                                                      \label{eqZ2}
\end{equation}
for all $t\in[0,T]$, 
where $(\tilde V_t)_{t\in[0,T]}$ is given in \eqref{tilde Wiener}, 
and the process $(\bar V_t)_{t\in[0,T]}$ is defined by 
$$
d\bar V_t=d\tilde V_t-P_t(B_t)\,dt=dV_t+(B_t(X_t)-P_t(B_t))\,dt, \quad \bar V_0=0. 
$$
\end{theorem}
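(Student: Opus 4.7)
The plan is to exploit the reference probability method. Under Assumption \ref{assumption Girsanov}, $dQ = \gamma_T\, dP$ defines a probability measure under which $\tilde V$ from \eqref{tilde Wiener} is an $\cF_t$-Wiener process. Since the density involves only $V$, the processes $W$, $\tilde N_0$, $\tilde N_1$ keep their characteristics under $Q$; furthermore $\cF_t^Y$ coincides, up to null sets, with the $\sigma$-algebra generated by $Y_0$, $(\tilde V_s)_{s\leq t}$, and the restriction of $N_1$ to $[0,t]\times\frZ_1$, and under $Q$ these three are mutually independent and jointly independent of $W$ and $N_0$. Define $\mu_t(\varphi) := \E_Q(\gamma_t^{-1}\varphi(X_t)\,|\,\cF_t^Y)$ for bounded Borel $\varphi$; a standard Bayes (Kallianpur--Striebel) calculation then yields $P_t(\varphi) = \mu_t(\varphi)/\mu_t(\mathbf{1})$ almost surely. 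A weakly cadlag measure-valued version of $\mu_t$, hence of $P_t$, is obtained on a common null set after the integral representation below is established, by restricting to a countable dense subset of $C^{2}_b(\bR^d)$.

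Next, compute $d(\gamma_t^{-1}\varphi(X_t))$ for $\varphi \in C^{2}_b(\bR^d)$ via It\^o's formula. Direct calculation gives $d\gamma_t^{-1} = \gamma_t^{-1} B_t^k\, d\tilde V_t^k$, so $\gamma_t^{-1}$ is a positive $Q$-martingale. The jump It\^o formula applied to $\varphi(X_t)$ produces the generator $\cL_t\varphi$, Wiener integrands involving $\sigma$ and $\rho$, the $\nu_i$-compensator terms $\int J_t^\eta\varphi\,\nu_0(d\frz) dt + \int J_t^\xi\varphi\,\nu_1(d\frz) dt$, and discontinuous martingales against $\tilde N_0, \tilde N_1$. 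Applying the product rule, using the covariation $d[\gamma^{-1},\varphi(X)]_t = \gamma_t^{-1}\rho_t^{ik}B_t^k D_i\varphi(X_t)\, dt$ and substituting $dV_t = d\tilde V_t - B_t\,dt$, a small cancellation among drifts produces exactly $\cM_t^k\varphi = \rho_t^{ik}D_i\varphi + B_t^k\varphi$ on the $d\tilde V$ integrand and clears all remaining drift terms, leaving
\begin{equation*}
\begin{split}
\gamma_t^{-1}\varphi(X_t) =\,& \varphi(X_0) + \int_0^t \gamma_s^{-1}\cL_s\varphi\, ds + \int_0^t \gamma_s^{-1}\cM_s^k\varphi\, d\tilde V_s^k \\
&+ \int_0^t\!\!\int_{\frZ_0}\gamma_s^{-1} J_s^\eta\varphi\, \nu_0(d\frz)ds + \int_0^t\!\!\int_{\frZ_1}\gamma_s^{-1} J_s^\xi\varphi\, \nu_1(d\frz)ds + M_t,
\end{split}
\end{equation*}
where $M_t$ is a $Q$-local martingale composed of stochastic integrals against $dW$, $\tilde N_0$ and $\tilde N_1$.

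Taking $\E_Q(\cdot\,|\,\cF_t^Y)$ of this identity and applying the projection theorem of Section \ref{section preliminaries} produces the Zakai equation \eqref{eqZ1}: conditional Fubini pushes the projection through the Lebesgue and $\nu_i$-integrals, replacing $\gamma_s^{-1}(\cdot)$ with $\mu_s(\cdot)$; the $dW$ and $\tilde N_0$ parts of $M$ vanish by independence; and the $d\tilde V$ and $\tilde N_1$ parts retain their stochastic integral form with the integrands replaced by their $\cF^Y$-predictable projections $\mu_s(\cM_s^k\varphi)$ and $\mu_{s-}(I_s^\xi\varphi)$. Finally \eqref{eqZ2} follows from \eqref{eqZ1} by It\^o's product rule applied to $\mu_t(\varphi)\mu_t(\mathbf{1})^{-1}$: specialising \eqref{eqZ1} to $\varphi=\mathbf{1}$ and using $\cL_s\mathbf{1}=J_s^\eta\mathbf{1}=J_s^\xi\mathbf{1}=I_s^\xi\mathbf{1}=0$ while $\cM_s^k\mathbf{1} = B_s^k$ gives $d\mu_t(\mathbf{1})=\mu_t(B_t^k)\, d\tilde V_t^k$, a continuous positive $Q$-martingale; after that the covariation between $\mu_t(\varphi)$ and $1/\mu_t(\mathbf{1})$ combined with the change $d\tilde V = d\bar V + P_t(B_t)\,dt$ cancels all extraneous drifts and recovers the innovation form \eqref{eqZ2}.

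The principal technical obstacle is the projection step for the $\tilde N_1$-integral: the integrand $\gamma_s^{-1} I_s^\xi\varphi(X_{s-},\frz)$ is not $\cF^Y$-predictable, so one needs to identify its $\cF^Y$-predictable projection as $\mu_{s-}(I_s^\xi\varphi)$ and verify commutation of this projection with the random-measure stochastic integral on the product space. Carrying this out for non-adapted integrands, and with the correct left-limit version of $\mu_s$, is precisely the content of the projection theorem proved in Section \ref{section preliminaries}, and it is the reason that preparatory section precedes the derivation of Theorem \ref{theorem Z1}.
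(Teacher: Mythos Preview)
Your approach is essentially the paper's: reference measure, It\^o formula for $\gamma^{-1}\varphi(X)$, project onto $\cF^Y$ via the Section~\ref{section preliminaries} machinery, then normalise by $\mu_t(\mathbf 1)$. Two points, however, are glossed over where the paper does real work.

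First, your route to the weakly cadlag measure-valued version (``restricting to a countable dense subset of $C_b^2$ after the integral representation'') is not what the paper does and is incomplete as stated: knowing that $t\mapsto\mu_t(\varphi)$ is cadlag for countably many $\varphi$ does not by itself produce a cadlag \emph{measure}-valued process. The paper instead invokes Yor's regular-conditional-distribution result (Lemma~\ref{lemma pmeasure} and Corollary~\ref{corollary mp}) \emph{before} deriving the equation, after observing that $(Y_t)$ is a L\'evy process under $Q$ so that $(\cF^Y_t)$ is right-continuous.

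Second, you omit the verification that Corollary~\ref{corollary projection} actually applies to each term of \eqref{Ito}. The paper devotes the bulk of the proof to checking that the random variables $G$, $G^{(i)}$, $F^{(i)}$, $H^{(i)}$ are $\sigma$-integrable under $Q$ relative to $\cF_0^Y$, and that the optional projections satisfy condition~\eqref{ph}. This is where Assumption~\ref{assumption SDE} and the moment bound~\eqref{bound_Z} enter in earnest, via the localisation $\Omega_n=\{|Y_0|\le n\}$ and the auxiliary estimate $\E_Q(\mathbf{1}_{\Omega_n}\sup_{t\le T}\gamma_t^{-1})<\infty$; without these checks the ``conditional Fubini'' step is only formal.
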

\begin{remark}
Clearly, $\bar V=(\bar V_t)_{t\in[0,T]}$ is a continuous process, 
starting from zero, 
and by the help of Lemma \ref{lemma sa} below it is easy to see that 
it is  $\cF^Y_t$-adapted.
Moreover, it is not difficult to see that $\bar V$ is a martingale 
(under $P$) 
with respect to $(\cF_t)_{t\geq0}$, 
with quadratic variation process $[\bar V]_t=t$, $t\in[0,T]$. 
Hence by L\'evy's theorem, $\bar V$ is an $\cF^Y_t$-Wiener 
process.  It is called the {\it innovation process} in the case 
when the observation process 
does not have a stochastic integral component with respect to Poisson measures, 
i.e., when $\nu_1=0$. In this case  
it was conjectured that  $(\bar V_s)_{s\in[0,t]}$ together with $Y_0$ 
carry the same information as the observation $(Y_s)_{s\in[0,t]}$, 
i.e., that the $\sigma$-algebra generated by $(\bar V_s)_{s\in[0,t]}$ 
and $Y_0$ coincides with the $\sigma$-algebra 
generated by $(Y_s)_{s\in[0,t]}$ for every $t$. 
An affirmative result concerning this conjecture,  
under quite general conditions on the 
filtering models (but without jump components) was proved 
in \cite{Krylov1979} and \cite{HL2008}. For our filtering model we conjecture 
that $(\bar V_s)_{s\in[0,t]}$, together with $Y_0$ and 
$\{\tilde N((0,s]\times \Gamma):s\in[0,t], \Gamma\in\cZ_1\}$ carry the same 
information as the observation $(Y_s)_{s\in[0,t]}$, 
if Assumption \ref{assumption SDE}  
holds and the coefficients of \eqref{system_1} satisfy an appropriate Lipschitz condition.
\end{remark}

We will prove Theorem \ref{theorem Z1} by deducing 
equation \eqref{eqZ2} from equation 
\eqref{eqZ1}, which we obtain by taking, under $Q$, 
the conditional expectation of the terms in the equation 
for $\gamma_t^{-1}\varphi(X_t)$,  given the observation $\{Y_s: s\leq t\}$.

There are several known conditions ensuring that Assumption 
\ref{assumption Girsanov} is satisfied. 
For a simple proof for 
the well-known Novikov condition and Kazamaki condition, 
and their generalizations we refer to 
Exercise 6.8.VI in \cite{K},  \cite{K2002} and \cite{K2019}. 
These conditions, are clearly satisfied if $|B|$ is bounded,  
but it does not seem to be easy to reformulate them in terms 
of the coefficients of the system of equations \eqref{system_1}, 
if $|B|$ is unbounded. 
Here we give a condition, which together with Assumption \ref{assumption SDE}(i) 
ensures that Assumption \ref{assumption Girsanov} holds. 

\begin{assumption}                                                        \label{assumption Girsanov 2}
There is a constant $K$ such that
$$
-x^i\rho^{ik}(t,z)B^{k}(t,z)\leq K(1+|z|^2)
\quad 
\text{for all $t\in[0,T]$, $z=(x,y)\in\bR^{d+d'}$}.
$$
\end{assumption}
\begin{remark}
Define the $\bR^{(d+d')\times d'}$-valued function $\hat\rho$ by 
$\hat\rho^{jk}:=\rho^{jk}$ for $j=1,2,...,d$, $k=1,2,...,d'$ and 
$\hat\rho^{jk}:=0$ for $j=d+1,...,d+d'$, $k=1,2,...,d'$. Then 
Assumption \ref{assumption Girsanov 2} means that the  
``one-sided linear growth" condition 
$$
zf(t,z)\leq K(1+|z|^2), 
\quad
t\in[0,T],\, z\in\bR^{d+d'}, 
$$ 
holds for the $\bR^{d+d'}$-valued function 
$f=-\hat\rho B$, where 
$zf$ denotes the standard inner product of the vectors 
$z,f\in\bR^{d+d'}$. 
Clearly, this condition is essentially 
weaker then the linear growth 
condition on $f$ (in $z\in\bR^{d+d'}$), 
which obviously holds if one of the functions 
$\rho$ and $B$ is bounded in magnitude 
and the other satisfies the linear growth condition in Assumption 
\ref{assumption SDE} (i). 
\end{remark}
\begin{theorem}                                                               \label{theorem Girsanov}
Let Assumptions \ref{assumption SDE}(i)  
and \ref{assumption Girsanov 2} hold. Then $\E\gamma_T=1$, i.e., 
Assumption \ref{assumption Girsanov} holds. 
\end{theorem}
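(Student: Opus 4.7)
The plan is a standard localization argument for exponential local martingales, closed using the one-sided growth hypothesis of Assumption \ref{assumption Girsanov 2} to obtain an a priori second moment bound for $Z$ under a sequence of candidate measures.

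First I would set $\tau_n:=T\wedge\inf\{t\in[0,T]:|Z_t|\geq n\}$ and $\gamma^n_t:=\gamma_{t\wedge\tau_n}$. On $[0,\tau_n)$ the integrand $B(s,Z_s)$ is bounded by $\sqrt{K_0+K_2n^2}$ by Assumption \ref{assumption SDE}(i), so Novikov's condition trivially ensures that $\gamma^n$ is a true $\cF_t$-martingale with $\E_P\gamma^n_T=1$. This allows me to define a probability measure $Q_n$ on $\cF$ by $dQ_n=\gamma^n_T\,dP$; by Girsanov's theorem the process
$$
\tilde V^n_t:=V_t+\int_0^{t\wedge\tau_n}B(s,Z_s)\,ds
$$
is a $Q_n$-Wiener process, while the independent Wiener process $W$ and the Poisson martingale measures $\tilde N_0,\tilde N_1$ retain their character under $Q_n$, since only the drift of $V$ is shifted.

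The heart of the proof is a uniform-in-$n$ second moment estimate for $Z$ under $Q_n$. Under $Q_n$ the SDE for $X$ acquires the extra drift $-\rho B$ on $[0,\tau_n]$, and $Y$ becomes a driftless semimartingale with driving Wiener process $\tilde V^n$ and jump part coming from $\tilde N_1$. Applying It\^o's formula to $|Z_t|^2=|X_t|^2+|Y_t|^2$ and taking $Q_n$-expectations (stochastic-integral martingale parts stopped at $\tau_n$ have mean zero), the drift equals
$$
2X^i(b^i-\rho^{ik}B^k)+|\sigma|^2+|\rho|^2+\int_{\frZ_0}|\eta|^2\,\nu_0(d\frz)+\int_{\frZ_1}|\xi|^2\,\nu_1(d\frz)+d'+\int_{\frZ_1}|\frz|^2\,\nu_1(d\frz).
$$
By Assumption \ref{assumption SDE}(i), every term except the cross term $-2X^i\rho^{ik}B^k$ is bounded by $C(1+|Z_t|^2)$; crucially, Assumption \ref{assumption Girsanov 2} gives $-2X^i\rho^{ik}B^k\leq 2K(1+|Z_t|^2)$, so the whole drift is $\leq C(1+|Z_t|^2)$ with $C$ independent of $n$. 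Writing $g_n(t):=\E_{Q_n}|Z_{t\wedge\tau_n}|^2$, which is finite thanks to the stopping and the finiteness of jump moments, Gronwall's lemma yields
$$
g_n(T)\leq C'(1+\E_P|Z_0|^2),
$$
uniformly in $n$, using the identity $\E_{Q_n}|Z_0|^2=\E_P[\gamma^n_T|Z_0|^2]=\E_P|Z_0|^2$, valid since $Z_0$ is $\cF_0$-measurable and $\E_P[\gamma^n_T|\cF_0]=1$. If $K_2>0$ the right-hand side is finite by Assumption \ref{assumption SDE}(ii); if $K_2=0$ then $|B|^2\leq K_0$, so $\E\gamma_T=1$ already follows from Novikov's condition.

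To conclude I would apply Chebyshev's inequality to get $Q_n(\tau_n<T)\leq g_n(T)/n^2\to 0$; since $\gamma$ is a nonnegative local martingale under $P$ it is a supermartingale with $\E_P\gamma_T\leq 1$, while
$$
1=\E_P\gamma^n_T=\E_P[\gamma_T\mathbf{1}_{\tau_n=T}]+Q_n(\tau_n<T),
$$
and monotone convergence gives $\E_P\gamma_T=1$. The main obstacle is the It\^o-formula computation in the moment estimate: one must carefully track that the Poisson compensators contribute precisely $\int|\eta|^2\nu_0$ and $\int|\xi|^2\nu_1$ (both controlled by Assumption \ref{assumption SDE}(i)) and verify that the Girsanov shift exactly cancels the $B\,dt$ drift in $dY$, so that the only potentially uncontrolled term is the cross term $-2X\rho B$ — which is precisely what the one-sided growth condition of Assumption \ref{assumption Girsanov 2} was designed to handle.
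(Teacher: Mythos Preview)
Your overall strategy---localise $Z$ by the exit times $\tau_n$, use the stopped exponential $\gamma^n$ to define a probability $Q_n$, obtain a uniform-in-$n$ second moment bound for $Z$ under $Q_n$ via It\^o's formula and Gronwall, then close with Chebyshev and monotone convergence---is a legitimate alternative to the paper's argument. The paper works directly under $P$: it restricts to $\{|Z_0|\leq R\}$, applies It\^o's formula to $U_t=\mathbf{1}_{|Z_0|\leq R}\gamma_t|Z_t|^2$, and closes by showing $\E\sup_{t\leq T}\mathbf{1}_{|Z_0|\leq R}\gamma_t<\infty$ through the Davis inequality and dominated convergence. Both routes hinge on exactly the same observation: Assumption~\ref{assumption Girsanov 2} is precisely what controls the cross term $-2X^i\rho^{ik}B^k$ appearing in the drift of $|Z|^2$ after the Girsanov shift (equivalently, in the drift of $\gamma|Z|^2$ under $P$).

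There is, however, a genuine gap in your closing step. You bound $g_n(T)\leq C'(1+\E_P|Z_0|^2)$ and then invoke Assumption~\ref{assumption SDE}(ii) to conclude finiteness. But Theorem~\ref{theorem Girsanov} assumes only Assumption~\ref{assumption SDE}(i) and Assumption~\ref{assumption Girsanov 2}; part~(ii) is \emph{not} among the hypotheses. Moreover, even if it were, Assumption~\ref{assumption SDE}(ii) only says $K_2\E|X_0|^2<\infty$; it places no restriction whatsoever on $Y_0$, so $\E_P|Z_0|^2=\E_P|X_0|^2+\E_P|Y_0|^2$ may well be infinite, rendering your Gronwall bound vacuous and the Chebyshev step inconclusive. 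This is exactly why the paper first truncates on $\{|Z_0|\leq R\}$ and proves $\E[\mathbf{1}_{|Z_0|\leq R}\gamma_T]=P(|Z_0|\leq R)$ for each fixed $R$, letting $R\to\infty$ only at the very end. Your argument can be repaired by the same device: run the moment estimate for $\tilde g_n(t):=\E_{Q_n}\big[\mathbf{1}_{|Z_0|\leq R}|Z_{t\wedge\tau_n}|^2\big]$, whose initial value is at most $R^2$, deduce $Q_n(\{|Z_0|\leq R\}\cap\{\tau_n<T\})\to 0$, conclude $\E_P[\mathbf{1}_{|Z_0|\leq R}\gamma_T]=P(|Z_0|\leq R)$, and then send $R\to\infty$.
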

\begin{proof}
We want to prove $\E({\bf1}_{|Z_0|\leq R}\gamma_T)=P(|Z_0|\leq R)$ for every constant $R>0$, since 
by monotone convergence it implies 
$$
\E\gamma_T=\lim_{R\to\infty}\E({\bf1}_{|Z_0|\leq R}\gamma_T)=\lim_{R\to\infty}P(|Z_0|\leq R)=1.
$$
To this end we fix a constant $R>0$  and set $\bar\gamma_t:={\bf1}_{|Z_0|\leq R}\gamma_t$. 
By It\^o's formula 
$$
d\bar\gamma_t=-\bar\gamma_t B(t,Z_t)\,dV_t, 
$$
that shows that 
$\bar{\gamma}$ is a local $\cF_t$-martingale.  
Thus $\E\bar\gamma_{T\wedge \tau_n}=P(|Z_0|\leq R)$ 
for an increasing sequence $(\tau_n)_{n=1}^{\infty}$ of stopping times $\tau_n$ 
such that $\tau_n$ converges to $\infty$ as $n\to\infty$, 
and $(\gamma_{t\wedge\tau_n})_{t\in[0,T]}$ is a martingale for every $n$.
 Consequently, if we can show $\E\sup_{t\leq T}\bar\gamma_t<\infty$, then we can use Lebesgue's 
theorem on dominated convergence to get $\E\bar\gamma_T=P(|Z_0|\leq R)$.  
Define the stopping times 
$$
\tau_n=\inf\{t\in[0,T]: [\bar\gamma]_t\geq n\}\quad\text{for integers $n\geq1$}, 
$$
where
$$
[\bar{\gamma}]_t=\int_0^t\bar{\gamma}_s^2|B(s,Z_t)|^2\,ds. 
$$
Then by standard estimates, using the Davis inequality, we have 
\begin{align}
\E\sup_{t\leq T}\bar\gamma_{t\wedge\tau_n}
\leq 1+3\E[\bar\gamma]^{1/2}_{T\wedge\tau_n}                                                                             
\leq &1+3\E\sup_{t\leq T}\bar\gamma^{1/2}_{t\wedge\tau_n}
\Big(\int_0^{T\wedge\tau_n}          
\bar\gamma_t|B(t,Z_t)|^2\,dt
\Big)^{1/2}                                                                             \nonumber\\
\leq &1+\tfrac{1}{2}\E\sup_{t\leq T}\bar\gamma_{t\wedge\tau_n}
+5\E\int_0^T\bar\gamma_t|B(t,Z_t)|^2\,dt,                                                                               \nonumber
\end{align}
which, after we subtract 
$\tfrac{1}{2}E\sup_{t\leq T}\bar\gamma_{t\wedge\tau_n}$ and let $n\to\infty$, 
by Fatou's lemma gives
$$
\tfrac{1}{2}\E\sup_{t\leq T}\bar\gamma_{t}
\leq 1+5\E\int_0^T\bar\gamma_t|B(t,Z_t)|^2\,dt
\leq 1+5\E\int_0^T\bar\gamma_t(K_0+K_2|Z_t|^2)\,dt.  
$$
Since $\E\bar\gamma_t\leq 1$, to show that the right-hand side of the last inequality is finite 
we need only prove that if $K_2\neq0$ then
\begin{equation}                                                                                                      \label{gmoment}
\sup_{t\leq T}\E\bar\gamma_t |Z_t|^2<\infty. 
\end{equation}
To this end we apply It\^o's formula 
to $U_t:=\bar\gamma_t|Z_t|^2$ and use 
Assumptions \ref{assumption SDE} (ii) and \ref{assumption Girsanov 2} to get 
\begin{align}                                                           
dU_t=&\bar\gamma_t(2X_tb(t,Z_t)+2Y_tB(t,Z_t)
+|\sigma(t,Z_t)|^2+|\rho(t,Z_t)|^2+1)\,dt                                                           \nonumber\\
&-2\bar\gamma_t(X_t\rho(t,Z_t)B(t,Z_t)
+Y_tB_t(t,Z_t))\,dt
+\bar\gamma_t\int_{\frZ_0}|\eta(t,Z_t,\frz)|^2\,\nu_0(d\frz)dt  \nonumber\\
&+\bar\gamma_t\int_{\frZ_1}|\xi(t,Z_t,\frz)|^2\,\nu_1(d\frz)dt
+\bar\gamma_t\int_{\frZ_1}|\frz|^2\,\nu_1(d\frz)dt+dm_t             \nonumber\\
\leq& N\bar\gamma_t\,dt +NU_t\,dt+dm_t
\end{align}
with a constant $N$ and a cadlag local martingale $m$ starting from zero. 
Hence by a standard stopping time argument and Gronwall's 
inequality we get a constant $N$
such that 
$$
\sup_{t\leq T}\E U_{t\wedge\tau_n}\leq N(1+\E({\bf1}_{|Z_0|\leq R}|Z_0|^2))<\infty
$$
for an increasing sequence of stopping times $\tau_n\uparrow\infty$. 
Letting here $n\to\infty$ by Fatou's lemma 
we get \eqref{gmoment}, which finishes 
the proof of the theorem. 
\end{proof}

\mysection{Preliminaries}                                                               \label{section preliminaries}

The following lemma is our main tool 
for calculating conditional expectations of 
Lebesgue and It\^o stochastic 
integrals of simple processes under $Q$ given $\cF^Y_t$.
\begin{lemma}                                                                             \label{lemma p1}
Let $X$ and $Y$ be random variables such that 
$\E|X|<\infty$, $\E|Y|< \infty$ and $\E|XY|<\infty$. Let $\cG^1$, $\cG^2$ 
and $\cG$ be $\sigma$-algebras of events such that $\cG^1\subset\cG$, 
$\cG^2$ is independent of $\cG$, $X$ is $\cG$-measurable and 
$Y$ is independent of $\cG\vee\cG^2:=\sigma(\cG,\cG^2)$. 
Then 
$$
\E(XY|\cG^1\vee\cG^2)=\E(X|\cG^1)\E Y. 
$$
\end{lemma}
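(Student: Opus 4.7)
The plan is to verify the defining properties of conditional expectation directly: (a) the right-hand side $\E(X|\cG^1)\E Y$ is $\cG^1\vee\cG^2$-measurable (it is even $\cG^1$-measurable), and (b) it has the same integral as $XY$ over every set in $\cG^1\vee\cG^2$. Since $\cG^1$ and $\cG^2$ are independent (because $\cG^2$ is independent of $\cG\supset\cG^1$), the collection
$$
\cP=\{A_1\cap A_2 : A_1\in\cG^1,\,A_2\in\cG^2\}
$$
is a $\pi$-system that generates $\cG^1\vee\cG^2$, so by a standard monotone class / $\pi$-$\lambda$ argument it suffices to verify
$$
\E\bigl(XY\,\mathbf 1_{A_1}\mathbf 1_{A_2}\bigr)=\E\bigl(\E(X|\cG^1)(\E Y)\,\mathbf 1_{A_1}\mathbf 1_{A_2}\bigr)
$$
for all $A_1\in\cG^1$, $A_2\in\cG^2$.

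To compute the left-hand side I would first use that $X\mathbf 1_{A_1}\mathbf 1_{A_2}$ is $\cG\vee\cG^2$-measurable (since $X\mathbf 1_{A_1}$ is $\cG$-measurable and $\mathbf 1_{A_2}$ is $\cG^2$-measurable) and that $Y$ is independent of $\cG\vee\cG^2$, which gives
$$
\E\bigl(XY\,\mathbf 1_{A_1}\mathbf 1_{A_2}\bigr)=\E\bigl(X\,\mathbf 1_{A_1}\mathbf 1_{A_2}\bigr)\,\E Y.
$$
Next, since $\cG^2$ is independent of $\cG$ and $X\mathbf 1_{A_1}$ is $\cG$-measurable,
$$
\E\bigl(X\,\mathbf 1_{A_1}\mathbf 1_{A_2}\bigr)=\E\bigl(X\,\mathbf 1_{A_1}\bigr)P(A_2)=\E\bigl(\E(X|\cG^1)\mathbf 1_{A_1}\bigr)P(A_2).
$$
Combining the two identities yields the left-hand side $=\E(\E(X|\cG^1)\mathbf 1_{A_1})P(A_2)\E Y$. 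The right-hand side collapses to the same quantity by the same independence of $\cG^1\subset\cG$ and $\cG^2$.

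The only delicate point is the monotone class step: to pass from $\cP$ to all of $\cG^1\vee\cG^2$ one would approximate $XY$ by truncations $(XY)\wedge n\vee(-n)$ to stay in $L^1$, apply the $\pi$-$\lambda$ theorem to the $\lambda$-system of events on which the two expectations agree, and then pass to the limit using $\E|XY|<\infty$. I do not expect any real obstacle beyond keeping track of the integrability hypotheses $\E|X|<\infty$, $\E|Y|<\infty$, $\E|XY|<\infty$, which are exactly what is needed to make the Fubini-type splittings in the two displayed identities above legitimate.
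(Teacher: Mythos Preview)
Your proof is correct and follows essentially the same $\pi$--$\lambda$ argument as the paper: both verify the integral identity on the $\pi$-system $\{A_1\cap A_2:A_i\in\cG^i\}$ and then invoke Dynkin's lemma. One small remark: the truncation you mention is unnecessary, since $\E|XY|<\infty$ and $\E\bigl|\E(X|\cG^1)\,\E Y\bigr|\leq\E|X|\,|\E Y|<\infty$ already make the family of events on which the two integrals agree a $\lambda$-system directly (closure under increasing unions is just dominated convergence).
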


\begin{proof}
The right-hand side of the above equation is a $\cG^1$-measurable random variable, hence it is obviously 
$\cG^1\vee\cG^2$-measurable. Let $\cH$ denote the family of  
$G\in\cG^1\vee\cG^2$ such that 
$$
\E Y\E(\E(X|\cG^1){\bf1}_G)=\E(XY{\bf1}_G).
$$
Then $\cH$ is a $\lambda$-system, and for $G=G_1\cap G_2$, $G_i\in\cG^i$ 
we have 
$$
\E Y \E(\E(X|\cG^1){\bf1}_G)=
\E Y\E(\E({\bf1}_{G_1}X|\cG^1){\bf1}_{G_2}))=
\E Y\E(\E({\bf1}_{G_1}X|\cG^1))\E{\bf1}_{G_2}
$$
$$
=\E Y\E({\bf1}_{G_1}X)\E{\bf1}_{G_2}=\E(XY{\bf1}_{G}), 
$$
that shows that $\cH$ contains the $\pi$-system $\{G_1\cap G_2:G_i\in\cG^i, i=1,2\}$. 
Hence, by Dynkin's monotone class lemma $\cH=\cG^1\vee\cG^2$, 
which completes the proof. 
\end{proof}
To formulate a theorem on conditional expectations of Lebesgue and 
It\^o integrals we consider a complete filtered probability space 
$(\Omega,\cF,P,\cF_t)$ carrying independent 
$\cF_t$-Wiener processes $W^{i}=(W^{i}_t)_{t\geq0}$ 
and independent $\cF_t$-Poisson random 
measures $N_i=N_i(\frz,dt)$ with $\sigma$-finite 
characteristic measures $\nu_i$ on separable 
measurable spaces $(\frZ_i,\cZ_i)$ for $i=0,1$, 
respectively.  We denote by $\cG_t$ the $P$-completion 
of the 
$\sigma$-algebra generated by the events of 
a $\sigma$-algebra $\cY_0\subset\cF_0$ 
together with the random variables 
$W^1_s$ and $N_1((0,s]\times\Gamma)$ 
for $s\leq t$ and $\Gamma\in\cZ_{1}$ 
such that $\nu_1(\Gamma)<\infty$.  
The predictable $\sigma$-algebras on $\Omega\times[0,T]$, 
relative to $(\cF_t)_{t\geq0}$ 
and $(\cG_t)_{t\geq0}$ are denoted by $\cP_{\cF}$ and 
$\cP_{\cG}$, respectively. The optional $\sigma$-algebras relative to 
$(\cF_t)_{t\geq0}$ 
and $(\cG_t)_{t\geq0}$ are denoted by $\cO_{\cF}$ and 
$\cO_{\cG}$, respectively.

The following definition will be frequently used.
\begin{definition}
Given a probability space $(\Omega,\cF,P)$ and 
a sub-$\sigma$-algebra $\cG\subset\cF$, 
we say that a random variable $f$ is $\sigma$-integrable 
(with respect to $P$) relative to $\cG$, if there exists an increasing 
sequence $(\Omega_n)_{n=1}^{\infty}$ 
such that $\bigcup_n \Omega_n = \Omega$, $\Omega_n\in\cG\!$  
and $\E |f{\bf1}_{\Omega_n}|<\infty$ for all $n$.
\end{definition}

One knows that  for nonnegative random variables $f$ 
and $\sigma$-algebras $\cG\subset\cF$ the conditional 
expectation $\E(f|\cG)$ is well-defined, and that for general 
random variables $f$ the extended conditional expectation 
$\E(f|\cG)$ 
is defined as $\E(f^+|\cG)-\E(f^-|\cG)$ on the set $\E(|f||\cG)<\infty$ 
and it is defined to be $+\infty$ on $\E(|f||\cG)=\infty$. 
It is not difficult to show that the extended conditional 
expectation $\E(f|\cG)$ is almost surely finite if and only if $f$ 
is $\sigma$-integrable (with respect to $P$) relative to $\cG$.

We  consider real-valued $\cF\otimes\cB([0,T])$-measurable 
$\cF_t$-adapted processes $f=(f_t)_{t\in[0,T]}$ and $g=(g_t)_{t\in[0,T]}$ 
on $\Omega\times[0,T]$, 
real-valued $\cF\otimes\cB([0,T])\otimes\cZ_i$-measurable functions  
$h^{(i)}=h^{(i)}_t(\omega,\frz)$ of 
$(\omega,t,\frz)\in\Omega\times[0,T]\times \frZ_i$ 
for $i=0,1$, and 
a real-valued $\cF\otimes\cB([0,T])\otimes\cZ$-measurable function 
$h=h_t(\omega,\frz)$ of 
$(\omega,t,\frz)\in\Omega\times[0,T]\times\frZ$, 
such that for every $t\in[0,T]$ the functions 
$h^{(i)}_t$ and $h_t$ are $\cF_t\otimes\cZ_i$-measurable 
and $\cF_t\otimes\cZ$-measurable, respectively, 
for $i=0,1$, 
where $(\frZ,\cZ)$ is a separable measurable space, 
equipped with a $\sigma$-finite measure 
$\nu$.
Assume that almost surely
\begin{equation}                                                                                      \label{fh}
F:=\left(\int_0^T |f_s|^2\,ds\right)^{1/2}<\infty
\quad 
H^{(i)}
:=\left(\int_0^T \int_{\frZ_i}|h^{(i)}_s(\frz)|^2\,\nu_i(d\frz) ds\right)^{1/2}<\infty
\end{equation}
\begin{equation}                                                                                        \label{gh}
G:=\int_0^T|g_s|\, ds<\infty, 
\quad
H:=\int_0^T \int_{\frZ}|h_s(\frz)|\,\nu(d\frz)ds<\infty 
\end{equation}
for $i=0,1$.  Then the processes   
$$
\alpha_t:=\int_0^tg_s\,ds, \quad \delta_t
:=\int_0^t\int_{\frZ}h_s(\frz)\,\nu(d\frz)ds,\quad t\in[0,T], 
$$
and 
\begin{equation}                                                                                             \label{abc}
\beta_t^{(i)}=\int_0^tf_s\,dW^{i}_s, \quad 
\delta^{(i)}_t=\int_0^t\int_{\frZ_i}h^{(i)}_s(\frz)\,\tilde N_i(d\frz,ds), 
\quad t\in[0,T],
\end{equation}
are well-defined for $i=0,1$,  and we have the following theorem.
    
\begin{theorem}                                                                                                  \label{theorem p0}
Assume the random variables 
$F^r$, $G$, $H$ and $|H^{(i)}|^2$ 
for $i=0,1$, for some $r>1$ are $\sigma$-integrable 
(with respect to $P$) relative to $\cG_0$. 
Then for $t\in[0,T]$ we have 
\begin{equation}                                                                                    
\E(\beta^{(1)}_t| \cG_t) = \int_0^t \hat f_s\,dW^1_s,   
 \quad   
 \E(\beta_t^{(0)}| \cG_t)= 0,                                                                \label{p1}
\end{equation}
\begin{equation}
 \E(\alpha_t|\cG_t)=\int_0^t\hat g_s\,ds,  
 \quad
 \E(\delta_t|\cG_t)=\int_0^t\int_{\frZ}\hat h_s(\frz)\,\nu(d\frz)ds,                             \label{p3}
\end{equation}
\begin{equation}
 \E(\delta^{(1)}_t|\cG_t)     
 =\int_0^t\int_{\frZ_1}\hat h^{(1)}_s(\frz)\,\tilde N_1(d\frz,ds),   \quad 
 \E(\delta^{(0)}_t|\cG_t)=0                                                                  \label{p4}                                       
 \end{equation}
almost surely for some $\cP_{\cG}$-measurable functions $\hat f$ and $\hat g$ on 
$\Omega\times[0,T]$, a $\cP_{\cG}\otimes\cZ_1$-measurable function 
$\hat h^{1}$ on $\Omega\times[0,T]\times\frZ_1$, and a  
$\cP_{\cG}\otimes\cZ$-measurable function 
$\hat h$ on $\Omega\times[0,T]\times\frZ$
such that 
\begin{equation}                                                                        \label{hatfg}                                                                          
\hat f_t=\E(f_t| \cG_t), \quad \hat g_t=\E(g_t|\cG_t) 
\quad\text{\rm{(a.s.)} for $dt$-a.e. $t\in[0,T]$}, 
\end{equation}
\begin{equation}                                                                           \label{hath1}
\hat h^{(1)}_t=\E(h^{(1)}_t(\frz)|\cG_t)
\quad
\text{\rm{(a.s.)} for $dt\otimes \nu_1$-a.e. 
$(t,\frz)\in[0,T]\times\frZ_1$},
\end{equation}
\begin{equation}                                                                           \label{hath}
\hat h_t=\E(h_t(\frz)|\cG_t)\quad
\text{\rm{(a.s.)} for $dt\otimes \nu$-a.e. $(t,\frz)\in[0,T]\times\frZ$}. 
\end{equation}
\end{theorem}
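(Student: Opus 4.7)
The plan is to reduce the theorem to the case of finite moments via $\sigma$-integrability, verify the identities on simple integrands using Lemma~\ref{lemma p1}, and then pass to the limit by approximation. For the first step I would choose sets $\Omega_n \in \cG_0$ with $\Omega_n \uparrow \Omega$ on which $\E F^r$, $\E G$, $\E H$ and $\E |H^{(i)}|^2$ are all finite. Since $\mathbf{1}_{\Omega_n}$ is $\cG_0$-measurable, hence $\cG_t$-measurable for every $t$, conditional expectations commute with multiplication by $\mathbf{1}_{\Omega_n}$, so it suffices to prove \eqref{p1}, \eqref{p3} and \eqref{p4} under the finite-moment hypotheses.

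Next, I would verify the identities for simple integrands of the form $f_s = \sum_{k=0}^{m-1} \xi_k \mathbf{1}_{(s_k, s_{k+1}]}(s)$ with bounded $\cF_{s_k}$-measurable $\xi_k$ and $0 = s_0 < \cdots < s_m = t$, and analogously for $g$, $h$, $h^{(1)}$ (restricted to $\Gamma$ with $\nu_1(\Gamma) < \infty$) and $h^{(0)}$. The key step is the identity
$$
\E(\xi_k \mid \cG_t) = \E(\xi_k \mid \cG_{s_k}),
$$
obtained from Lemma~\ref{lemma p1} applied with $\cG = \cF_{s_k}$, $\cG^1 = \cG_{s_k}$, $X = \xi_k$, $Y \equiv 1$, and
$$
\cG^2_k := \sigma\bigl(W^1_u - W^1_{s_k},\ N_1((s_k, u] \times \Gamma) : u \in (s_k, t],\ \nu_1(\Gamma) < \infty\bigr),
$$
which is independent of $\cF_{s_k}$ and satisfies the decomposition $\cG_t = \cG_{s_k} \vee \cG^2_k$. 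Combining this with (i) the $\cG_t$-measurability of $W^1_{s_{k+1}} - W^1_{s_k}$ and of $\tilde N_1((s_k, s_{k+1}] \times \Gamma)$, and (ii) the independence from $\cF_{s_k} \vee \cG^2_k$ of $W^0_{s_{k+1}} - W^0_{s_k}$ and $\tilde N_0((s_k, s_{k+1}] \times \Gamma)$ (a consequence of the mutual independence of $W^0, W^1, N_0, N_1$ as $\cF_t$-processes), a further invocation of Lemma~\ref{lemma p1} yields \eqref{p1} and \eqref{p4} for simple integrands, with $\hat f_s = \E(\xi_k \mid \cG_{s_k})$ on $(s_k, s_{k+1}]$ and likewise for $\hat h^{(1)}$. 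The identities in \eqref{p3} for $\alpha_t$ and $\delta_t$ follow from Fubini's theorem after the same conditioning is applied pointwise in $s$ (respectively in $(s, \frz)$).

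For the limiting step I would approximate the general integrands by simple ones: $f^n \to f$ in $L^r(P \otimes ds)$, $g^n \to g$ in $L^1(P \otimes ds)$, $h^n \to h$ in $L^1(P \otimes ds \otimes \nu)$, and $h^{(i),n} \to h^{(i)}$ in $L^2(P \otimes ds \otimes \nu_i)$. It\^o's isometry, the BDG inequality and elementary $L^1$ bounds yield convergence of the left-hand sides of \eqref{p1}, \eqref{p3} and \eqref{p4}. Since conditional expectation is an $L^p$-contraction, the predictable projections of the approximants converge in the same topologies, so the right-hand sides converge as well; the limit processes $\hat f, \hat g, \hat h, \hat h^{(i)}$ are $\cP_\cG$-measurable (respectively $\cP_\cG \otimes \cZ_1$- or $\cP_\cG \otimes \cZ$-measurable) by construction. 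Properties \eqref{hatfg}--\eqref{hath} follow because for simple integrands $\hat f_s = \E(f_s \mid \cG_s)$ on $(s_k, s_{k+1}]$, using that $\cG_{s-} = \cG_s$ for $ds$-a.e.\ $s$ (the jumps of $(\cG_t)$ being confined to the countable jump set of $N_1$), and this identity is preserved in the limit.

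The main obstacle is less the algebraic heart of the argument -- the splitting $\cG_t = \cG_{s_k} \vee \cG^2_k$ together with Lemma~\ref{lemma p1} handles it cleanly -- than the bookkeeping: one must carry four different classes of integrands through the approximation while maintaining only $\sigma$-integrability (rather than global moments), and verify carefully that the $\cP_\cG$-measurable limit really represents $s \mapsto \E(f_s \mid \cG_s)$ for $ds$-a.e.\ $s$ rather than some other version of the projection.
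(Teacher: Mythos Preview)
Your strategy matches the paper's: localise via $\sigma$-integrability, verify on simple integrands using Lemma~\ref{lemma p1} and the splitting $\cG_t=\cG_{s_k}\vee\cG^2_k$, then pass to the limit. The arguments for $g$, $h$ and $h^{(i)}$ go through essentially as you describe, since in those cases the pointwise contraction $\|\E(\cdot\mid\cG_t)\|_{L^p}\le\|\cdot\|_{L^p}$ integrates over $t$ (and $\frz$) to give a Cauchy sequence of projected integrands in $L^1(P\otimes ds)$, $L^1(P\otimes ds\otimes\nu)$, $L^2(P\otimes ds\otimes\nu_i)$ respectively.

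The gap is in the Wiener case $\beta^{(1)}$. The hypothesis is $\E F^r<\infty$ with $F=\bigl(\int_0^T|f_s|^2\,ds\bigr)^{1/2}$, so the natural approximation is $f^n\to f$ in $L^r\bigl(\Omega;L^2([0,T])\bigr)$, not in $L^r(P\otimes ds)$. In the former norm the map $f\mapsto\hat f$ with $\hat f_t=\E(f_t\mid\cG_t)$ is \emph{not} a contraction when $r\neq2$ (there is no pointwise inequality between $\int_0^T|\hat f_t|^2\,dt$ and $\int_0^T|f_t|^2\,dt$), so your sentence ``conditional expectation is an $L^p$-contraction, hence the predictable projections converge in the same topologies'' does not apply. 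If instead you work in $L^r(P\otimes ds)$, then for $1<r<2$ this gives no control on $\int_0^T|\hat f_s|^2\,ds$, so neither the definition nor the convergence of $\int_0^t\hat f_s\,dW^1_s$ follows. The paper closes this gap by turning the problem around: from the simple-process identity $I_t(\hat f^n)=\E(I_t(f^n)\mid\cG_t)$ one estimates, via the Davis inequality, Doob's $L^r$ maximal inequality, and BDG,
\[
\E\Bigl(\int_0^T|\hat f^n_t-\hat f^m_t|^2\,dt\Bigr)^{1/2}
\le 3\,\E\sup_{t}|I_t(\hat f^n-\hat f^m)|
\le \tfrac{3r}{r-1}\Bigl(\E\sup_t|I_t(f^n-f^m)|^r\Bigr)^{1/r}
\le N(r)\Bigl(\E F(f^n-f^m)^r\Bigr)^{1/r}.
\]
This is precisely where the assumption $r>1$ enters; your outline never uses it. (The remark about $\cG_{s-}=\cG_s$ is unnecessary: the key identity $\E(\xi_k\mid\cG_s)=\E(\xi_k\mid\cG_{s_k})$ already holds for every $s\ge s_k$, not just for $ds$-a.e.\ $s$.)
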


\begin{proof}
Since $F^r$ is $\sigma$-integrable with respect to $\cG_0$,  
there is an increasing sequence 
$\Omega_n\in\cG_0$ such that  
$\bigcup_{n=1}^{\infty}\Omega_n=\Omega$ and 
$\E({\bf1}_{\Omega_n}F^r)<\infty$ 
for every integer $n\geq1$. 
By the definition and elementary properties of (extended) conditional expectations 
and stochastic integrals, we have 
$$
 {\bf1}_{\Omega_n}\E\Big(\int_0^tf_s\,dW^i_s\Big| \cG_t\Big)= 
\E\Big( {\bf1}_{\Omega_n}\int_0^tf_s\,dW^i_s\Big| \cG_t\Big)=
\E\Big(\int_0^t {\bf1}_{\Omega_n}f_s\,dW^i_s\Big| \cG_t\Big),
$$
$$ 
{\bf1}_{\Omega_n}\E(f_t|\cG_t)=\E({\bf1}_{\Omega_n}f_t|\cG_t), \quad t\in[0,T]
$$
for $i=0,1$ and every $n\geq1$. Thus, 
taking ${\bf1}_{\Omega_n}f$ in place of $f$, 
we may assume that $\E F^r<\infty$. Similarly, 
we may also assume that $\E G$, $\E H$ and 
$\E|H^{(i)}|^2$ are 
finite in what follows below.
Assume first that $f$ belongs to $\cH_0$, 
the set of simple processes of the form 
\begin{equation}                                                                                              \label{s1}
    f_t = \sum_{i=0}^{k-1}\xi_i \1_{(t_i,t_{i+1}]}(t), 
\end{equation}
where $0=t_0\leq \cdots \leq t_k = T$ are deterministic time instants,  
and $\xi_i$ is a bounded  $\F_{t_i}$-measurable 
random variable for every $i=0,1,...,k-1$ 
for an integer $k\geq1$.  Then we have 
\begin{equation}                                                                                                \label{p0}
\E\Big(\int_0^tf_s\,dW^1_s\Big| \cG_t\Big)
=\sum_{i}\E\big(\xi_i(W^1_{t_{i+1}\wedge t}-W^1_{t_{i}\wedge t})\big|\cG_t\big),\quad 
\text{for $t\in[0,T]$}.
\end{equation}
For $0\leq r\leq s\leq T$ define the $\sigma$-algebra 
$$
\cG_{r,s}
=\sigma
(W^1_v-W^1_u, N_1(\Gamma\times(u,v]):\,  
r\leq u\leq v\leq s, \Gamma\in\cZ_1, \nu_1(\Gamma)<\infty). 
$$
Then $\sigma$-algebras $\cG_{r}$ and $\cG_{r,s}$ 
are independent and $\cG_s=\cG_{r}\vee\cG_{r,s}$. 
Thus, using Lemma \ref{lemma p1} 
with $X:=\xi_i$, $Y:=1$, 
$\cG^1:=\cG_{t_i}$, 
$\cG:=\cF_{t_i}$
and $\cG^2:=\cG_{t_i,s}$ for $t_i\leq s\leq T$,  
we have 
\begin{equation}                                                                         \label{conditioning}                                                                  
\E(\xi_i|\cG_s)=\E(\xi_i|\cG_{t_i})
\quad
\text{for $i=0,1,2,...,k-1$}.
\end{equation}
Hence for $t_i\leq s\leq t_{i+1}\leq t\leq T$,
\begin{equation}                                                                                     \label{p1W}
\E(\xi_i(W^1_{t_{i+1}}-W^1_{t_{i}})|\cG_{t})=
\E(\xi_i|\cG_t)(W^1_{t_{i+1}}-W^1_{t_{i}})=
\E(\xi_i|\cG_s)(W^1_{t_{i+1}}-W^1_{t_{i}})  
\end{equation}
and for $t_j\leq s\leq t\leq T$, 
\begin{equation}                                                                                    \label{p2W}
\E(\xi_j(W^1_t-W^1_{t_j})|\cG_t)
=\E(\xi_j|\cG_{t})(W^1_{t}-W^1_{t_j})
=\E(\xi_j|\cG_{s})(W^1_{t}-W^1_{t_j}). 
\end{equation}
Consequently, defining $\hat f_s=\E(\xi_i|\cG_s)=\E(f_s|\cG_s)$ 
for $s\in(t_i,t_{i+1}]$, $i=0,1,...,k-1$, 
the function $\hat f$ on $\Omega\times[0,T]$ is $\cP_{\cG}$-measurable, 
and using \eqref{p0} 
we can see that the first equation in \eqref{p1} holds. 
Assume now that $f$ is 
$\cF\otimes\cB([0,T])$-measurable and $\cF_t$-adapted
such that $\E F^r<\infty$.  
Then there are sequences $(f^n)_{n=1}^{\infty}$ and $(\hat f^n)_{n=1}^{\infty}$ 
such that $f^n\in\cH_0$, $\hat f ^n$ is $\cP_{\cG}$-measurable,  
\begin{equation}                                                                                  \label{limf}
\lim_{n\to\infty}\E\left(\int_0^T|f_t-f^n_t|^2\,dt\right)^{r/2}=0, 
\end{equation}
and almost surely 
\begin{equation}                                                                        \label{project}
\E(I_t(f^n)|\cG_t):=\E\Big(\int_0^t f^n_s\,dW^1_s\Big|\cG_t\Big)
=\int_0^t \hat f^n_s\,dW^1_s
=:I_t(\hat f^n)\quad\text{for all $t\in[0,T]$, }
\end{equation}
\begin{equation}                                                                                         \label{6.23.1}
\hat f^n_t=\E(f^n_t|\cG_t)\quad \text{for $dt$-a.e. $t\in[0,T$]} 
\end{equation}
for all $n\geq1$.
Using the Davis inequality, Doob's inequality, 
Jensen's and Burkholder's inequalities for any $r>1$ we have  
$$
\E\left(\int_0^T|\hat f^n_t-\hat f^m_t|^2\,dt\right)^{1/2}
\leq 3 \E\sup_{t\leq T} |I_t(\hat f^n-\hat f^m)|
$$
$$
=3\E\sup_{t\in[0,T]\cap\bQ}|\E \big(I_t(f^n-f^m)\big|\cG_t\big)|
\leq 3\E\sup_{t\in[0,T]\cap\bQ}(\E(\sup_{s\leq T}|I_s(f^n-f^m)||\cG_t)
$$
$$
=3\frac{r}{r-1}\left(\E\sup_{t\leq T}|I_t(f^n-f^m)|^r\right)^{1/r}
\leq N\left(\E\left(\int_0^T|f^n_t-f^m_t|^2\,dt\right)^{r/2}\right)^{1/r}, 
$$
where $\bQ$ is the set of rational numbers 
and $N=N(r)$ is a constant, 
which gives 
$$
\lim_{n,m\to\infty}
\E\left(\int_0^T|\hat f^n_t-\hat f^m_t|^2\,dt\right)^{1/2}=0. 
$$
Thus there exists a $\cP_{\cG}$-measurable function 
$\hat f$ on $\Omega\times[0,T]$, 
such that 
\begin{equation}                                                                      \label{lim0}
\lim_{n\to\infty}\E\left(\int_0^T|\hat f_t-\hat f^n_t|^2\,dt\right)^{1/2}=0,  
\end{equation}
which implies 
\begin{equation}                                                                       \label{lim1}
\lim_{n\to\infty}\E\sup_{t\in[0,T]}|I_t(\hat f)-I_t(\hat f^n)|=0. 
\end{equation}
Using Jensen's and Davis' inequalities again 
we have
$$
\E|\E(I_t(f)|\cG_t)-\E(I_t(f^n)|\cG_t)|
\leq
\E\E(|I_t(f-f^n)||\cG_t)
$$
$$
=\E |I_t(f-f^n)|
\leq 3\E\left(\int_0^T|f_t-f^n_t|^2\,dt\right)^{1/2}\quad\text{for every $t\in[0,T]$},  
$$
i.e., for $n\to\infty$ 
\begin{equation}                                                                        \label{lim2}
\E(I_t(f^n)|\cG_t)\to \E(I_t(f)|\cG_t)
\quad 
\text{in $L_1(\Omega)$ for every $t\in[0,T]$}. 
\end{equation}
Thus letting $n\to\infty$ in equation \eqref{project}, by virtue of 
\eqref{lim1} and  \eqref{lim2} we get the first equation  
in \eqref{p1}. 
Clearly,   \eqref{limf} and \eqref{lim0} imply
$$
\lim_{n\to\infty}\int_0^T\E|f_t-f^n_t|+\E|\hat f_t-\hat f^n_t|\,dt=0.
$$
Hence there is a subsequence $n_l\to\infty$ and  a set 
$S\in\cB([0,T])$ of Lebesgue measure $0$ such that for $n_l\to\infty$,
$$
\text{$f^{n_l}_t\to f_t$ and a 
$\hat f^{n_l}_t\to \hat f_t$ in $L_1(\Omega)$ for each $t\in [0,T]\setminus S=:S^c$},  
$$
and taking into account \eqref{6.23.1}, we can assume that $S$ is a $dt$-zero set such that 
we also have $\hat{f}_t^{n_l}=\E(f^{n_l}_t|\cG_t)$ (a.s.) for every $t\in S^c$. 
 Thus  for  $n_l\to\infty$ we have $\E(f^{n_l}_t|\cG_t)\to \E(f_t|\cG_t)$ 
in $L_1(\Omega)$ for each $t\in S^c$, which gives 
$$
\hat f_t=\E(f_t|\cG_t)\quad \text{almost surely for every $t\in S^c$},  
$$
i.e., the first equation in \eqref{hatfg} holds. To prove the second equation in 
\eqref{p1} we note that 
for $\xi_i$ from the 
expression \eqref{s1}  we have 
\begin{equation}                                                                                     \label{pW2}
\E(\xi_i(W^0_{t_{i+1}\wedge t}-W^0_{t_i})|\cG_t)
=\E(\xi_i|\cG_{t_i})\E(W^0_{t_{i+1}\wedge t}-W^0_{t_i})=0
\quad 
\text{for $i=1,2,...,N-1$}, 
\end{equation}
by using Lemma \ref{lemma p1} with $X=\xi_i$, 
$Y=W^0_{t_{i+1}\wedge t}-W^0_{t_{i}\wedge t}$ 
$\cG^1:=\cG_{t_i}\subset\cF_{t_i}=:\cG$ and 
$\cG^2:=\cG_{t_i,t}$ for $t_i\leq t$. 
Hence we get the second equation in \eqref{p1} 
for $f$ given in \eqref{s1}, 
and the general case follows 
by approximation as above.
To prove the first equation in 
\eqref{p3} assume that $g$ is given by the right-hand side of \eqref{s1}. 
Then using \eqref{conditioning} we can see that  
$$
\hat g_t:=\sum_{i=0}^{k-1}
\E(\xi_i|\cG_{t_i}){\bf1}_{(t_i,t_{i+1}]}(t)
=\E(g_t|\cG_t), \quad t\in[0,T], 
$$
and that the first equation in \eqref{p3} 
and the second equation in \eqref{hatfg} hold.
Assume now that $g$ is an $\cF\otimes\cB([0,T])$-measurable 
$F_t$-adapted random process such that $\E G<\infty$.  
Then there are sequences $(g^n)_{n=1}^{\infty}$ 
and $(\hat g^n)_{n=1}^{\infty}$ 
such that $g^n\in\cH_0$, $\hat g ^n$ is $\cP_{\cG}$-measurable,  
\begin{equation*}                                                                                  \label{limg}
\lim_{n\to\infty}\E\int_0^T|g_t-g^n_t|\,dt=0, 
\end{equation*}
and almost surely 
\begin{equation*}                                                                        \label{projectg}
\E\Big(\int_0^t g^n_s\,ds\Big|\cG_t\Big)
=\int_0^t \hat g^n_s\,ds
\quad\text{for all $t\in[0,T]$, }
\end{equation*}
\begin{equation*}                                                                                         \label{1.2.5.22}
\hat g^n_t=\E(g^n_t|\cG_t)\quad \text{for $dt$-a.e. $t\in[0,T$]} . 
\end{equation*}
Hence noting that by Tonelli's theorem and Jensen's inequality 
$$
\E\int_0^T|\hat g^n_t-\hat g^m_t|\, dt
=\int_0^T\E|\E(g^n_t|\cG_t)-\E(g^m_t|\cG_t)|\,dt
$$
$$
\leq \int_0^T\E\E(|g^n_t-g^m_t||\cG_t)\,dt=\E\int_0^T|g^n_t-g^m_t|\,dt, 
$$
and repeating previous arguments we get 
a $\cP_{\cF}$-measurable $\hat g$ 
such that the first equation in \eqref{p3} and the second equation 
in \eqref{hatfg} hold. To prove the second equation in \eqref{p3} 
we assume first that 
\begin{equation}                                                                              \label{s3}
h_t(\frz)=\sum_{i=0}^{k-1}
\xi_i{\bf1}_{(t_i,t_{i+1}]\times\Gamma_i}(t,\frz), 
\end{equation}
for a partition $0\leq t_0\leq t_1\leq...\leq t_k=T$ of $[0,T]$, 
bounded $\cF_{t_i}$-measurable 
random variables $\xi_i$ and sets 
$\Gamma_i\in\cZ$, $\nu(\Gamma_i)<\infty$ for $i=0,...,k-1$.
Then
$$ 
\E\Big(\int_0^t \int_{\frZ}h_s(\frz)\,\nu(d\frz)\,ds\Big|\cG_t  \Big)
=\sum_{i=0}^{k-1}
\E(\xi_i|\cG_t)\nu(\Gamma_i)(t_{i+1}\wedge t-t_{i}\wedge t).  
\quad t\in[0,T],
$$
Thus, since by virtue of \eqref{conditioning} we have 
$$
\hat h_t(z):=\sum_{i=0}^{k-1}
\E(\xi_i|\cG_{t_i}){\bf1}_{(t_i,t_{i+1}]}(t){\bf1}_{\Gamma_i}(\frz)
=\E(h_t(\frz)|\cG_t), \quad t\in[0,T], \,\frz\in \frZ,
$$
for $\hat h$ the second equation in \eqref{p3} and by definition \eqref{hath} hold. 
Hence we can get these equations in the general case by a straightforward approximation procedure 
in the same way as the first equation in \eqref{p3} and the second equation in 
\eqref{hatfg} have been proved above. 

Now we are going to prove \eqref{p4}. 
Assume first that $h^{(1)}$ is a simple 
function, given by the right-hand side of equation 
\eqref{s3} with $\Gamma_i\in\cZ_1$, 
$\nu_1(\Gamma_i)<\infty$, $i=0,1,...,k-1$. 
Then
$$ 
\E\Big(\int_0^t \int_{\frZ_1}h^{(1)}_s(\frz)\,\tilde N_1(d\frz,ds)\Big|\cG_t  \Big)
=\sum_{i=0}^{k-1}
\E\big(\xi_i\tilde N_1(\Gamma_i\times(t_i\wedge t,t_{i+1}\wedge t])\big|\cG_t\big). 
$$
In the same way as equations \eqref{p1W} and \eqref{p2W} are obtained,  
by using \eqref{conditioning} 
we get 
$$
\E\big(\xi_i\tilde N_1( \Gamma_i\times(t_i,t_{i+1}])\big|\cG_t\big)
=\E(\xi_i|\cG_{t_i})\tilde N_1( \Gamma_i\times(t_i,t_{i+1}])
=\E(\xi_i|\cG_{s})\tilde N_1( \Gamma_i\times(t_i,t_{i+1}])
$$
for $t_{i}\leq s\leq t_{i+1}\leq t$, and 
$$
\E\big(\xi_j\tilde N_1( \Gamma_j\times(t_j,t])\big|\cG_t\big)
=
\E(\xi_j|\cG_{t_j})\tilde N_1( \Gamma_j\times(t_j,t])=
\E(\xi_j|\cG_{s})\tilde N_1( \Gamma_j\times(t_j,t])
$$
for $t_{j}\leq s\leq t\leq t_{j+1}$. Thus 
for 
$$
\hat h_t(\frz)=\sum_{i=0}^{k-1}
\E(\xi_{i}|\cG_{t_i}){\bf1}_{(t_i,t_{i+1}]\times\Gamma_i}(t,\frz)
=\E(h_t(\frz)|\cG_t),
$$
equations in \eqref{p4} and \eqref{hath1} hold. 
Assume now that $h^{(1)}$ is $\cF\otimes\cB([0,T])\otimes\cZ$-measurable 
such that for every $t\in[0,T]$ the function $h_t^{(1)}$ is 
$\cF_t\otimes\cZ_1$-measurable and $\E|H^{(1)}|^2<\infty$, 
where $H^{(1)}$ is defined in \eqref{fh}. 
Then there exist sequences 
$(h^n)_{n=1}^{\infty}$ and $(\hat h^n)_{n=1}^{\infty}$, such that $h^n$ 
is a simple function of the form \eqref{s3}, $\hat h^n$ 
is a $\cP_{\cG}\otimes\cZ_1$-measurable function,
\begin{equation}                                                                                               \label{phn}
\E(\tilde I_t(h^n)|\cG_t)
:=\E\Big(
\int_0^t\int_{\frZ_1}h^n_s(\frz)\,\tilde N_1(d\frz,ds)\big|\cG_t
\Big)
=\int_0^t\int_{\frZ_1}\hat h^n_s(\frz)\,\tilde N_1(d\frz,ds), 
\end{equation}
\begin{equation}                                                                          \label{hathn}
\hat h^n_t(z)=\E(h^n_t(z)|\cG_t),
\quad
\text{almost surely, 
for $\nu_1(d\frz)\otimes dt$-a.e. $(\frz,t)\in\frZ_1\times[0,T],$}
\end{equation}
for every $n\geq1$, and 
\begin{equation}                                                                                               \label{limhn}
\lim_{n\to\infty}
\E\int_0^T\int_{\frZ_1}|h^{(1)}_t(\frz)-h^n_t(\frz)|^2\,\nu_1(d\frz)\,dt=0. 
\end{equation}
Hence using Jensen's inequality we get 
$$
\lim_{n,m\to \infty}\E\int_0^T\int_{\frZ_1}|\hat h^n_t(\frz)-\hat h^m_t(\frz)|^2\,\nu_1(d\frz)dt=0, 
$$
which implies the existence of a 
$\cP_{\cG}\otimes\cZ_1$-measurable function 
$\hat h^{(1)}$ such that 
\begin{equation}                                                                                                     \label{limhathn}
\lim_{n\to\infty}\E\int_0^T\int_{\frZ_1}|\hat h^{(1)}_t(\frz)-\hat h^n_t(\frz)|^2\,\nu_1(d\frz)\,dt=0. 
\end{equation}
Thus letting $n\to\infty$ in \eqref{phn} we obtain \eqref{p4}. By virtue of \eqref{limhn} 
and \eqref{limhathn} there is a subsequence $n_l\to\infty$ and a set 
$A\in\cB([0,T])\otimes\cZ_1$ such that $dt\otimes\nu_1(A)=0$ 
and for $n_l\to\infty$ 
$$
h^{n_l}_t(\frz)\to  h^{(1)}_t(\frz)
\quad
\text{and\,\, $\hat h^{n_l}_t(\frz)
\to  \hat h^{(1)}_t(\frz)$ in 
mean square} 
$$
for every $(t,\frz)\in A^c:=[0,T]\times\frZ_1\setminus A$. Consequently, 
$$
\E(h^{n_k}_t(\frz)|\cG_t)\to \E(h^{(1)}_t(\frz)|\cG_t) 
\quad\text{in mean square for every $(\frz,t)\in A^c$}, 
$$
and letting $n:=n_l\to\infty$ in \eqref{hathn} 
we obtain $\E(h^{(1)}_t(\frz)|\cG_t)=\hat h^{(1)}_t(\frz)$ 
for $(\frz,t)\in A^c$, 
which proves \eqref{hath1}. 
To prove the second equation in \eqref{p4} assume first that $h^{(0)}$ 
is a simple function of the form \eqref{s3} with $\Gamma_i\in\cZ_0$, 
$\nu_0(\Gamma_i)<\infty$ for $i=0,1,...k-1$. 
Just like \eqref{pW2} is obtained, by Lemma \ref{lemma p1} we get 
$$
\E(\eta_i\tilde N_0(\Gamma_i\times(t_i\wedge t,t_{i+1}\wedge t])|\cG_t)=
\E(\eta_i|\cG_{t_i})\E\tilde N_0(\Gamma_i\times(t_i\wedge t,t_{i+1}\wedge t])=0
$$
for $i=0,1,...,k-1$ and $t\in[0,T]$, that implies the second equation in \eqref{p4}.  
Hence, we obtain the second equation in \eqref{p4} 
for $\cO_{\cF}$-measurable functions 
satisfying \eqref{fh} by approximation with simple functions. 
\end{proof}

We can reformulate the above theorem by using the notion of optional projection of 
processes with respect to a given filtration. 
It is well-known (see for instance \cite[Thm 5.1]{HWY}, \cite[Thm 2.43]{DM}) 
that if $f=(f_t)_{t\in[0,T]}$ is a $\mathcal{B}([0,T])\otimes \F$-measurable process 
such that $f_{\tau}$ is $\sigma$-integrable (with respect to 
a probability $P$) relative to the $\sigma$-algebra $\cG_{\tau}$ 
for every $\cG_t$-stopping time $\tau\leq T$ (with respect 
to a $P$-complete filtration $(\cG_t)_{t\in[0,T]}$), then 
there exists a unique (up to evanescence) $\cG_t$-optional process 
$^o\! f=({^o\! f}_t)_{t\in[0,T]}$
such that for every $\cG_t$-stopping time $\tau\leq T$
\begin{equation*}
    \E(f_{\tau}|\cF_{\tau}) ={^o\!f}_{\tau}\quad\text{(a.s.)}.
\end{equation*}
The process $^o\! f$ is called the optional projection of $f$ (under $P$ 
with respect to $(\cG_t)_{t\in[0,T]}$).  
If $f$ is a cadlag process such that almost surely $\sup_{t\leq T}|f_t|\leq \eta$ 
for some $\sigma$-integrable 
random variable $\eta$ with respect to $P$ relative to $\cG_0$, 
then almost surely the 
trajectories of $^o\! f$ have left and right limits at every $t\in(0,T]$ and $[0,T)$, 
respectively, 
and moreover, they are also almost surely right-continuous 
if $(\cG_t)_{t\in[0,T]}$ is right-continuous. 
One can define the extended optional projection $^o\!f$ of every nonnegative 
$\mathcal{B}([0,T])\otimes \F$-measurable process $(f_t)_{t\in[0,T]}$ 
(under $P$ with respect to $(\cG_t)_{t\in[0,T]}$)  
by setting $^o\!f:=\lim_{n\to\infty}{\bf1}_{S}{^o\!(f\wedge n)}$, 
where $S$ is the set of $(\omega,t)\in\Omega\times[0,T]$ such that 
$\lim_{n\to\infty} {^o\!(f\wedge n)_t(\omega)}$  exists, 
which may be infinite. For $\mathcal{B}([0,T])\otimes \F$-measurable 
stochastic processes  $(f_t)_{t\in[0,T]}$ one 
defines the extended  optional projection 
$^o\!f$ by  $^o\!f={^o\!(f^+)}-{^o\!(f^-)}$ on the set 
$A=\{(\omega,t)\in\Omega\otimes[0,T]: \,^o\!(f^+)+^o\!(f^-)<\infty\}$ 
and  $^o\!f=\infty$ on 
$\Omega\times[0,T]\setminus A$. Notice that 
for every $t\in[0,T]$ the extended 
conditional expectations $\E(f^+_t|\cG_t)$ and $\E(f^-_t|\cG_t)$ 
are almost surely equal to 
${^o\!(f^+)_t}$ and $^o\!(f^-)_t$, respectively. 
Thus defining the extended conditional expectation 
$\E(f_t|\cG_t)$ as $\E(f^+_t|\cG_t)-\E(f_t^-|\cG_t)$ on the event where 
$\E(f^+_t|\cG_t)+\E(f_t^-|\cG_t)<\infty$ and $\E(f_t|\cG_t)=\infty$ elsewhere,  
for each $t\in[0,T]$ we have 
$^o\!f_t=\E(f|\cG_t)$ (a.s.). Let $h=(h_t(\frz))$ be 
an $\cF\otimes\cB(\bR_+)\otimes\cZ$-measurable function 
on $\Omega\times\bR_+\times\frZ$. 
Then by the help of the Monotone Class Theorem it is not difficult to show the existence of 
an $\cO_{\cG}\otimes\cZ$-measurable function, which 
for each fixed $\frz\in \frZ$ gives the (possibly extended) $\cO_{\cG}$-optional projection  
of 
$h(\frz):=(h_t(\frz))_{t\geq0}$. We denote this function by $^o\!h$,  
and call it the (extended) $\cO_{\cG}$-optional projection of $h$.

\begin{corollary}                                                                         \label{corollary projection}
Assume the random variables 
$F$, $H^{(i)}$ and $G$, $H$, defined in  \eqref{fh} and \eqref{gh}, 
respectively, 
are $\sigma$-integrable relative to $\cG_0$ for $i=0,1$. 
Assume moreover that almost surely 
\begin{equation}                                                                                              \label{ph}
\int_0^T|^o\!f_t|^2dt<\infty,
\quad
\int_0^T\int_{\frZ_i}|^o\!h^{(i)}_t(\frz)|^2\,\nu_i(d\frz)dt<\infty
\quad
\text{for $i=0,1$}, 
\end{equation}
where $^o\!f$ 
 and $^o\!h^{(i)}$ are the (extended) $\cO_{\cG}$-optional projections of 
$f$ and $h^{(i)}$, respectively. Then for every $t\in[0,T]$ 
equations \eqref{p1}, \eqref{p3} and \eqref{p4} hold almost surely 
with the $\cO_{\cG}$-optional projections  
$^o\!f$, $^o\!g$, $^o\!h^{(i)}$ and $^o\!h$
in place of $\hat f$, $\hat g$, $\hat h^{(i)}$  and $\hat h$, 
respectively, for $i=0,1$. Moreover, there is a 
$dt$-null set $T_0\subset[0,T]$,  
a $dt\otimes\nu_1$-null set $B_0\subset[0,T]\times\frZ_1$ and  
a $dt\otimes\nu$-null set $B\subset[0,T]\times\frZ$,  such that  
\begin{enumerate}
\item[(i)] for each $t\in[0,T]\setminus T_0$ the random variable 
$|f_t|+|g_t|$ is $\sigma$-integrable relative to $\cG_0$ 
and 
\begin{equation}                                                                                                       \label{pgf} 
\E(f_t|\cG_t)={^o\!f_t}\in\bR, \,{\rm{(a.s.)}},
\quad  
\E(g_t|\cG_t)={^o\!g}_t\in\bR\, {\rm{(a.s.)}},
\end{equation}
\item[(ii)]
for each $(t,\frz)\in[0,T]\times\frZ_1\setminus B_0$ the random variable 
$|h^{(1)}_t(\frz)|$ is $\sigma$-integrable relative to $\cG_0$ and 
 \begin{equation}                                                                                                      \label{ph1}
\E(h^{(1)}_t(\frz)|\cG_t)={^o\!h_t}^{(1)}(\frz)\in\bR\, {\rm{(a.s.)}}, 
 \end{equation}
\item[(iii)]
for each $(t,\frz)\in[0,T]\times \frZ\setminus B$ the random variable 
$|h_t(\frz)|$ is $\sigma$-integrable relative to $\cG_0$, and 
 \begin{equation}                                                                                                      \label{oph}
\E(h_t(\frz)|\cG_t)={^o\!h_t}(\frz)\in\bR\,{\rm(a.s.)}.  
\end{equation}
\end{enumerate}
\end{corollary}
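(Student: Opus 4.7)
My strategy is to reduce Corollary~\ref{corollary projection} to Theorem~\ref{theorem p0} via a two-level localization, then to reinterpret the $\cP_{\cG}$-measurable projections from that theorem as $\cO_{\cG}$-optional projections. Using the $\sigma$-integrability hypotheses, I first fix a common exhaustion $(\Omega_n)_{n\geq 1}\subset\cG_0$ on which each of $\E(\mathbf{1}_{\Omega_n}F)$, $\E(\mathbf{1}_{\Omega_n}G)$, $\E(\mathbf{1}_{\Omega_n}H)$ and $\E(\mathbf{1}_{\Omega_n}H^{(i)})$ is finite; since $\mathbf{1}_{\Omega_n}\in\cG_0$ it commutes with conditional expectations, stochastic integrals and optional projections, so it is harmless to multiply each integrand by $\mathbf{1}_{\Omega_n}$. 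Next I introduce $\cF_t$-stopping times $\rho_m$ that cap the running integrals $\int_0^t |f_s|^2\,ds$, $\int_0^t |g_s|\,ds$, $\int_0^t\int_{\frZ}|h_s|\,\nu(d\frz)ds$ and $\int_0^t\int_{\frZ_i}|h^{(i)}_s|^2\,\nu_i(d\frz)ds$ at level $m$, and truncate the integrands to $f^{n,m}_t:=\mathbf{1}_{\Omega_n}f_t\mathbf{1}_{t\leq\rho_m}$, and similarly for $g,h,h^{(i)}$. The truncated processes are bounded in the strong norms required by Theorem~\ref{theorem p0} (e.g.\ $F^{n,m}\leq\sqrt{m}$, so $(F^{n,m})^r$ is integrable for any $r>1$), so that theorem produces $\cP_{\cG}$-measurable $\hat f^{n,m}$, $\hat g^{n,m}$, $\hat h^{(i),n,m}$, $\hat h^{n,m}$ satisfying \eqref{p1}, \eqref{p3}, \eqref{p4} for the truncated data.

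The next step is to replace $\hat f^{n,m}$ by the optional projection ${}^o\!(f^{n,m})$, and similarly for the other quantities. By \eqref{hatfg} and the defining property of the optional projection, both $\hat f^{n,m}_t$ and ${}^o\!(f^{n,m})_t$ coincide almost surely with $\E(f^{n,m}_t\mid\cG_t)$ for $dt$-a.e.~$t$, and hence agree $dt\otimes dP$-a.e., giving equal Wiener and Lebesgue integrals. For the $\tilde N_1$-integral I invoke quasi-left continuity of $N_1$ under both $(\cF_t)$ and $(\cG_t)$, which guarantees that the predictable and optional projections of $h^{(1),n,m}$ admit a common $dt\otimes\nu_1\otimes dP$-a.e.\ representative, so the two integrals against $\tilde N_1$ coincide.

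I then pass to the limit $n,m\to\infty$. Since $\rho_m\uparrow T$ a.s.\ (by a.s.\ finiteness of $F,G,H,H^{(i)}$) and $\bigcup_n\Omega_n=\Omega$, the truncated integrands converge to $f,g,h,h^{(i)}$ in $dt\otimes dP$- (or $dt\otimes\nu_i\otimes dP$-) measure, so the corresponding stochastic integrals converge in probability. On the other side, the $\cG_0$-measurability of $\mathbf{1}_{\Omega_n}$ and the elementary identity ${}^o\!(\mathbf{1}_{\Omega_n}f\mathbf{1}_{[0,\rho_m]})=\mathbf{1}_{\Omega_n}\,{}^o\!(f\mathbf{1}_{[0,\rho_m]})$ let one check that ${}^o\!(f^{n,m})\to {}^o\!f$ in $dt\otimes dP$-measure, and likewise for the other integrands. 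Assumption~\eqref{ph} together with the a.s.\ finiteness of $G$ and $H$ ensures that the limiting integrals of ${}^o\!f,\,{}^o\!g,\,{}^o\!h^{(i)},\,{}^o\!h$ are well-defined, and passing to the limit in the truncated identities delivers \eqref{p1}, \eqref{p3}, \eqref{p4}.

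For the pointwise assertions~(i)--(iii) I use Fubini: Cauchy--Schwarz yields $\int_0^T\E(\mathbf{1}_{\Omega_n}|f_t|)\,dt\leq\sqrt{T}\,\E(\mathbf{1}_{\Omega_n}F)<\infty$, so $\E(\mathbf{1}_{\Omega_n}|f_t|)<\infty$ for $dt$-a.e.\ $t$; intersecting the exceptional sets over $n$ produces a $dt$-null $T_0$ outside of which $|f_t|$ is $\sigma$-integrable relative to $\cG_0$, and similarly $|g_t|$ is $\sigma$-integrable (directly from the $\sigma$-integrability of $G$). For $|h^{(1)}_t(\frz)|$ and $|h_t(\frz)|$ one first exhausts $\frZ_1$ and $\frZ$ by sets of finite $\nu_1$- and $\nu$-measure and then applies the same Cauchy--Schwarz/Fubini argument to obtain $B_0$ and $B$. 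At each good point the identity $\E(f_t|\cG_t)={}^o\!f_t$ (and its analogues) follows from the general property of the optional projection. I expect the main obstacle to be the identification step for the $\tilde N_1$-integral: carefully leveraging the quasi-left continuity of $N_1$ with respect to the smaller filtration $(\cG_t)$ to match predictable and optional integrands outside a $dt\otimes\nu_1\otimes dP$-null set.
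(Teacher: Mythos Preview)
Your overall architecture (reduce to Theorem~\ref{theorem p0} via truncation, identify the $\cP_{\cG}$-projections with the $\cO_{\cG}$-optional projections $dt$-a.e., then pass to the limit) is sound, and your treatment of parts~(i)--(iii) via Fubini/Cauchy--Schwarz is essentially the paper's argument.  However, your second-level localization by the $\cF_t$-stopping times $\rho_m$ creates a genuine gap that the paper's argument is specifically designed to avoid.

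The difficulty is the limit $m\to\infty$ on the optional-projection side.  Since $\rho_m$ is an $\cF_t$-stopping time but not a $\cG_t$-stopping time, the indicator $\mathbf{1}_{[0,\rho_m]}$ does not commute with the $\cG_t$-optional projection.  The only available pointwise bound is
\[
|{}^o(f^{n,m})_t|
=\big|\E(\mathbf{1}_{\Omega_n}f_t\mathbf{1}_{t\leq\rho_m}\mid\cG_t)\big|
\leq\mathbf{1}_{\Omega_n}\,{}^o|f|_t,
\]
and similarly $|{}^o(h^{(1),n,m})_t(\frz)|\leq\mathbf{1}_{\Omega_n}\,{}^o|h^{(1)}|_t(\frz)$.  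But assumption~\eqref{ph} controls only $\int_0^T|{}^o f_t|^2\,dt$ and $\int_0^T\!\int_{\frZ_1}|{}^o h^{(1)}_t(\frz)|^2\,\nu_1(d\frz)dt$, not the corresponding integrals of $({}^o|f|)^2$ and $({}^o|h^{(1)}|)^2$; Jensen's inequality goes the wrong way here.  Consequently your convergence ${}^o(f^{n,m})\to{}^o f$ in $dt\otimes dP$-measure cannot be upgraded to convergence in probability of $\int_0^T|{}^o(f^{n,m})_t-{}^o f_t|^2\,dt$, and the stochastic integrals against $dW^1$ and $\tilde N_1$ need not converge.

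The paper avoids this by truncating in the \emph{range} rather than in time: for the $\tilde N_1$-term it sets $h^{(1)n}:=\mathbf{1}_{\frZ^n}\big((-n)\vee h^{(1)}\wedge n\big)$ with $\frZ^n\uparrow\frZ_1$, $\nu_1(\frZ^n)<\infty$.  This makes $|H^{(1)n}|^2$ integrable (so Theorem~\ref{theorem p0} applies directly), and---crucially---gives a domination $|{}^o h^{(1)n}|\leq|{}^o h^{(1)}|$ that matches hypothesis~\eqref{ph} exactly, so Lebesgue's theorem closes the argument.  The same device handles $f$ and the second equation in~\eqref{p4}.  If you want to repair your proof, replace the $\rho_m$-localization by this range truncation.

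A minor point: your appeal to quasi-left continuity is unnecessary.  Theorem~\ref{theorem p0} already gives $\hat h^{(1),n,m}_t(\frz)=\E(h^{(1),n,m}_t(\frz)\mid\cG_t)$ for $dt\otimes\nu_1$-a.e.~$(t,\frz)$, and ${}^o(h^{(1),n,m})_t(\frz)$ satisfies the same identity by definition, so the two integrands agree $dt\otimes\nu_1\otimes dP$-a.e.; that alone forces equality of the $\tilde N_1$-integrals.
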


\begin{proof} Just like in the proof of the previous lemma 
without loss of generality 
we may and will assume that 
$F$, $G$, $H$ and $H^{(i)}$, $i=0,1$, have finite expectation. Thus by 
Minkowski's inequality and Tonelli's theorem we have 
$$
\left(\int_0^T(\E|f_t|)^2\,dt\right)^{1/2}
\leq \E\left(\int_0^T|f_t|^2\,dt\right)^{1/2}<\infty, 
\quad
\int_0^T\E|g_t|\,dt
=\E\int_0^T|g_t|\,dt<\infty,  
$$
$$
\int_0^T\int_{\frZ}\E|h_t(\frz)|\,\nu(d\frz)\,dt
= \E\int_0^T\int_{\frZ}|h_t(\frz)|\,\nu(d\frz)\,dt<\infty
$$
$$
\left(
\int_0^T\int_{\frZ_1}(\E|h_t^{(1)}(\frz)|)^2\,\nu_1(d\frz)\,dt
\right)^{1/2}
\leq \E\left(\int_0^T\int_{\frZ_1}
|h_t^{(1)}(\frz)|^2\nu_1(d\frz)\,dt\right)^{1/2}<\infty.  
$$
Therefore $\E|f_t|+\E|g_t|<\infty$
for $dt$-almost every $t\in[0,T]$,  $\E|h_t(\frz)|<\infty$
 for 
$dt\otimes\nu$-a.e. $(t,\frz)\in[0,T]\times\frZ$, 
and $\E|h^{(1)}_t(\frz)|<\infty$  for 
$dt\otimes\nu_1$-a.e. $(t,\frz)\in[0,T]\times \frZ_1$, i.e., 
we get \eqref{pgf}, 
\eqref{ph1} and \eqref{oph}.
Hence due to \eqref{hatfg} and \eqref{hath}  
we have \eqref{p3} with  $^o\!g$ and $^o\!h$ in place of $\hat g$ 
and $\hat h$, respectively. 
We also have \eqref{p1} and \eqref{p4} 
with $^o\!f$ and $^o\!{h^{(1)}}$ in place of $\hat f$ and $\hat h^{(1)}$, 
provided $F^r$ and $|H^{(i)}|^2$ are 
$\sigma$-integrable relative to $\cG_0$ for $i=0,1$ for some $r>1$.  
Thus it remains to prove \eqref{p1} and \eqref{p4} with 
$^o\!f$ and $^o\!{h^{(1)}}$ 
in place of $\hat f$ and $\hat h^{(1)}$, respectively, 
under the condition that $F$ and $H^{(i)}$ are  
$\sigma$-integrable relative to $\cG_0$ for $i=0,1$, 
and \eqref{ph} holds.  
We show only \eqref{p4} under these conditions, because 
\eqref{p1} can be proven similarly. To this end  
 define $h^{(1)n}={\bf1}_{\frZ^n}(-n\vee h^{(1)}\wedge n)$
for integers $n\geq1$, where $(\frZ^n)_{n=1}^{\infty}$ is an increasing sequence 
of sets $\frZ^n\in \cZ_1$ 
such that $\bigcup_{n=1}^{\infty}\frZ^n=\frZ_1$ 
and $\nu_1(\frZ^n)<\infty$ for every $n\geq1$. 
Then for each $t\in[0,T]$ 
\begin{equation}                                                                                \label{eqn}
\E(\delta^{(1)n}_t|\cG_t)
=\int_0^t\int_{\frZ_1}{^o\!{h}^{(1)n}_s(\frz)}\,\tilde N_1(d\frz,ds)\quad\rm{(a.s.)}, 
\end{equation}
where $\delta^{(1)n}$ is defined as $\delta^{(1)}$ in \eqref{abc}, 
but with $h^{(1)n}$ in place of $h^{(1)}$. 
Note that 
$$
|^o\!{h}^{(1)n}|\leq |^o\!{h}^{(1)}|
\quad
\text{$P\otimes dt\otimes \nu_1$-almost every 
$(\omega,t,\frz)\in\Omega\times[0,T]\times\frZ_1$}, 
$$
and for $n\to$ we have 
${^o\!{h}^{(1)n}_s(\frz)}\to{^o\!{h}^{(1)}_s(\frz)}$ almost surely 
for every $(s,\frz)\in[0,T]\times\frZ_1$ such that 
${^o\!{h}^{(1)}_s(\frz)}\neq\infty$. Hence due to condition \eqref{ph}, 
by Lebesgue's theorem on dominated convergence 
we have 
$$
\int_0^T\int_{\frZ_1}|^o\!{h}^{(1)n}_s(\frz)-{^o\!{h}^{(1)}_s}(\frz)|^2\,\nu_1(d\frz)dt\to0
\,\,\rm{(a.s.)}\,\,\text{as $n\to\infty$}, 
$$
which implies 
\begin{equation}                                                                                     \label{oeqn}
\int_0^t\int_{\frZ_1}{^o\!{h}^{(1)n}_s(\frz)}\,\tilde N_1(d\frz,ds)
\to
\int_0^t\int_{\frZ_1}{^o\!{h}^{(1)}_s(\frz)}\,\tilde N_1(d\frz,ds)
\end{equation}
in probability, uniformly in $t\in[0,T]$. Using obvious properties of conditional 
expectations, by Davis inequality and Lebesgue's theorem 
on dominated convergence we get  
\begin{align*}
\lim_{n\to\infty}\E|\E(\delta^{(1)n}_t|\cG_t)-\E(\delta^{(1)}_t|\cG_t)|
&\leq \lim_{n\to\infty}\E|\delta^{(1)n}_t-\delta^{(1)}_t|                                            \\
&\leq 3\lim_{n\to\infty}
\E\left(\int_0^T\int_{\frZ_1}|h^{(1)n}_s(\frz)-h^{(1)}_s(\frz)|^2\,\,\nu_1(d\frz)ds\right)^{1/2}=0, 
\end{align*}
which by virtue of \eqref{eqn} and \eqref{oeqn} finishes 
the proof of the first equation in 
 \eqref{p4}. The second equation in \eqref{p4} can be obtained similarly. 
\end{proof}

\begin{remark}                                                                                                         \label{remark oah}
We have that almost surely 
$$
\int_0^t\int_{\frZ_i}{|^o\!h^{(i)}_s}(\frz)|^2\,\nu_i(d\frz)\,ds
\leq 
\int_0^t{^o\!(|h^{(i)}_s|^2_{L_2(\frZ_i)})}\,ds\quad\text{\rm{(a.s.)} for $i=0,1$},  
$$
for all $t\in[0,T]$. Thus 
\begin{equation}                                                                                                    \label{oah}
\int_0^T{^o\!(|h^{(i)}_t|^2_{L_2(\frZ_i)})}\,dt<\infty\quad\text{\rm{(a.s.)} for $i=0,1$}
\end{equation}
implies assumption \eqref{ph}.
\end{remark}

\begin{proof}
Let $i\in\{0,1\}$ be fixed and let $(A_n)_{n=1}^{\infty}$ be an increasing sequence of sets from $\frZ_i$ 
such that $\cup_{n=1}^{\infty}A_n=\frZ_i$ and $\nu_i(A_n)<\infty$ for every $n\geq1$. Set 
$$
h^{i,n}_t(\frz):=(-n)\vee ({\bf1}_{A_n}h_t^{(i)}(\frz))\wedge n.
$$
Then by Jensen's inequality for the optional projections we have 
$|{^oh^{i,n}_s(\frz)}|^2\leq {^o(|h^{i,n}_s(\frz)|^2)}$ for every 
$\frz\in\frZ_i$,  and 
by an application of Corollary \ref{corollary projection} we obtain 
\begin{align*}
\int_0^t\int_{\frZ_i}|{^oh^{i,n}_s(\frz)}|^2\,\nu_i(d\frz)ds
&\leq\int_0^t\int_{\frZ_i} {^o(|h^{i,n}_s(\frz)|^2)}\,\nu_i(d\frz)ds                                                            \\
&=\E\left(\int_0^t\int_{\frZ_i}|h^{i,n}_s(\frz)|^2\,\nu_i(d\frz)ds\Big|\cG_t\right)                                      \\
&=\int_0^t{^o(|h^{i,n}_s|^2_{L_2(\frZ_i)})}\,ds
\leq \int_0^t{^o(|h^{(i)}_s|^2_{L_2(\frZ_i)})}\,ds. 
\end{align*}
Letting here $n\to\infty$ 
and using the Monotone Convergence Theorem 
and the properties of extended optional projections on the left-hand side 
of the first inequality, we finish the proof of the remark. 
\end{proof}

Let  $\bP(\bR^{d})$ be the space of 
of probability measures on the Borel sets of $\bR^d$,   
equipped with the topology of weak convergence of measures. 
Let $C_b(\bR^d)$ denote the space of bounded continuous real functions on $\bR^d$, 
and as before, let $(\frZ,\cZ)$ be a separable measurable space.

\begin{lemma}                                                                   \label{lemma pmeasure}                                                                                 
Let $(\Omega, \cF,P)$ be a complete probability space 
equipped with a right-continuous 
filtration $(\cG_t)_{t\geq0}$, $\cG_t\subset\cF$ 
for $t\geq0$,  such that $\cG_0$ 
contains all $P$-zero sets of $\cF$. 
Let  $(X_t)_{t\geq0}$ be an $\bR^d$-valued 
$\cF\otimes\cB(\bR_+)$-measurable 
cadlag process. Then the following statements hold. 
\begin{enumerate}
\item[(i)] There is a $\bP(\bR^d)$-valued weakly 
cadlag  process $(P_t)_{t\geq0}$
such that for every bounded 
real-valued Borel function $\varphi$ on $\bR^d$ 
and for each $t\geq0$
\begin{equation}                                                                                   \label{pi}
P_t(\varphi)=\E(\varphi(X_t)|\cG_t)\,({\rm{a.s.}}).
\end{equation}
\item[(ii)]
Let $(P_t)_{t\geq0}$ be the measure-valued process from (i).  
Assume $f=f(\omega,t,\frz,x)$ is a 
$\mathcal O_{\cG}\otimes\cZ\otimes\cB(\bR^d)$-measurable 
real  function on 
$\Omega\times\bR_+\times\frZ\times\bR^d$. 
Define
$$
P_t(f(t,\frz)):=
\begin{cases} 
\int_{\bR^d}f(t,\frz,x)\,P_t(dx), 
\text{ for $(t,\omega,\frz)$, if }\int_{\bR^d}|f(t,\frz,x)|\,P_t(dx)<\infty\\
\infty \,\,\text{elsewhere.}
\end{cases}
$$
Then $P_t(f(t,\frz))$ is an $\mathcal O_{\cG}\otimes\cZ$-measurable 
(extended) function of $(\omega,t,\frz)$ such that 
\begin{equation}                                                                                 \label{exp}
\E(f(t,\frz,X_t)|\cG_t)=P_t(f(t,\frz))\,  \rm{ (a.s.)}
\quad
\text{for each $(t,\frz)\in\bR_+\times\frZ$}.  
\end{equation}
Moreover, $P_{t_0}(f(t_0,\frz_0))$ is finite (a.s.) 
for a pair $(t_0,\frz_0)\in \bR_+\times\frZ$ if $f(t_0,\frz_0,X_{t_0})$ 
is $\sigma$-integrable relative to $\cG_{t_0}$.
\end{enumerate}
\end{lemma}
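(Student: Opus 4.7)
For part (i), my plan is to construct $(P_t)$ by first treating a countable test family and then using Riesz representation. Pick a countable $\bQ$-linear subspace $\mathcal{D}\subset C_b(\bR^d)$ that is convergence-determining for $\bP(\bR^d)$, contains the constant $1$, and contains a sequence $(\chi_n)\subset C_c(\bR^d)$ with $0\leq \chi_n\uparrow 1$ pointwise. For each $\varphi\in\mathcal{D}$ the process $t\mapsto \varphi(X_t)$ is bounded and cadlag, so by standard theory of optional projections (using right-continuity of $(\cG_t)$ together with $P$-completeness of $\cG_0$) it admits a cadlag $\cG_t$-optional projection $\pi^{\varphi}$. Off a single $P$-null set $N\in\cG_0$, the map $\varphi\mapsto \pi^{\varphi}_t(\omega)$ is $\bQ$-linear, positivity-preserving, of norm bounded by $\|\varphi\|_\infty$, and satisfies $\pi^{1}_t=1$, simultaneously for every $t\geq 0$: each such identity is first established almost surely for each rational $t$ via the defining property of conditional expectations, the countably many exceptional sets are unioned into a single null set, and then each identity is extended from rationals to all $t$ by right-continuity of the cadlag versions.

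For $\omega\notin N$ and each $t\geq 0$, I extend $\ell_{t,\omega}(\varphi):=\pi^{\varphi}_t(\omega)$ from $\mathcal{D}$ to a positive linear functional on $C_0(\bR^d)$ of norm at most $1$ (using the presence of $\chi_n\uparrow 1$ in $\mathcal{D}$ to bound the norm), and apply the Riesz representation theorem to obtain a sub-probability measure $P_t(\omega)$ on $\bR^d$. The identity $P_t(\chi_n)=\pi^{\chi_n}_t\uparrow 1$ (again, a rational-$t$ statement promoted to all $t$ by cadlag monotone approximation) shows that $P_t$ is in fact a probability measure. Weak cadlag-ness of $(P_t)$ follows because $\mathcal{D}$ is convergence-determining and each $\pi^{\varphi}$ with $\varphi\in\mathcal{D}$ is cadlag, and $\cG_t$-adaptedness is inherited from the $\pi^{\varphi}_t$. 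A standard monotone class argument then promotes the identity \eqref{pi} from $\varphi\in\mathcal{D}$ to all bounded Borel $\varphi$.

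For part (ii), my plan is a functional monotone class argument. Let $\mathcal{H}$ be the collection of bounded $\cO_{\cG}\otimes\cZ\otimes\cB(\bR^d)$-measurable functions $f$ for which $P_t(f(t,\frz))$ is $\cO_{\cG}\otimes\cZ$-measurable and satisfies \eqref{exp}. For product functions $f(\omega,t,\frz,x)=g(\omega,t,\frz)\varphi(x)$ with $g$ bounded $\cO_{\cG}\otimes\cZ$-measurable and $\varphi$ bounded Borel, $P_t(f(t,\frz))=g(\omega,t,\frz)P_t(\varphi)$ is a product of jointly measurable quantities; and since $g(\omega,t,\frz)$ is $\cG_t$-measurable in $\omega$ at each fixed $t$, part (i) yields
$$
\E(g(t,\frz)\varphi(X_t)\mid\cG_t)=g(t,\frz)\,\E(\varphi(X_t)\mid\cG_t)=g(t,\frz)P_t(\varphi).
$$
Such products generate the full product $\sigma$-algebra, so the functional Monotone Class Theorem gives that $\mathcal{H}$ contains all bounded $\cO_{\cG}\otimes\cZ\otimes\cB(\bR^d)$-measurable functions. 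For general (not necessarily bounded) $f$, apply the bounded result to the truncations $(-n)\vee f\wedge n$ and pass to the limit using monotone convergence both for $P_t$ and for (extended) conditional expectation; this delivers both \eqref{exp} with the stated convention and the $\cO_{\cG}\otimes\cZ$-measurability of $P_t(f(t,\frz))$ as an extended real-valued function. The final claim on finiteness follows immediately: $\sigma$-integrability of $f(t_0,\frz_0,X_{t_0})$ relative to $\cG_{t_0}$ ensures $\E(|f(t_0,\frz_0,X_{t_0})|\mid\cG_{t_0})<\infty$ almost surely, and this equals $P_{t_0}(|f(t_0,\frz_0)|)$ by the established \eqref{exp} applied to $|f|$, so $P_{t_0}(f(t_0,\frz_0))$ is well-defined and finite almost surely.

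The main obstacle is the simultaneous-in-$t$ construction in part (i): ensuring that, outside a \emph{single} $P$-null set, the functionals $\ell_{t,\omega}$ are well-defined positive linear functionals of total mass one for every $t\geq 0$ (not merely for each $t$ separately), so that Riesz representation produces a true probability measure $P_t$ at every $t$ and a weakly cadlag process on the nose. The key mechanism is to exploit cadlag regularity of the optional projections $\pi^{\varphi}$ to reduce the uncountable family of $t$-indexed identities to a countable one (indexed by $t\in[0,\infty)\cap\bQ$ and $\varphi\in\mathcal{D}$), and then re-extend by right-continuity; the presence of the approximation $\chi_n\uparrow 1$ inside $\mathcal{D}$ is what lets us upgrade Riesz-sub-probabilities to probabilities uniformly in $t$.
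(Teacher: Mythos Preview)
Your argument is correct. For part (ii) your monotone class argument---starting from products $g(\omega,t,\frz)\varphi(x)$, passing to all bounded $f$ via the functional Monotone Class Theorem, then to general $f$ by truncation and monotone convergence---is exactly what the paper does, and your finiteness argument under $\sigma$-integrability is equivalent to the paper's localisation by $\Omega_n\in\cG_{t_0}$.

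The only difference is in part (i): the paper does not prove it at all but simply cites Yor's S\'eminaire note \cite{Y}, whereas you supply an explicit construction via cadlag optional projections of $\varphi(X_t)$ for a countable convergence-determining family and Riesz representation. Your construction is essentially the classical one (and close in spirit to what Yor does), so in substance you recover what the paper outsources. One small point worth tightening: after obtaining $P_t$ from Riesz on $C_0(\bR^d)$ you assert the identity $P_t(\varphi)=\pi_t^\varphi$ for all $\varphi\in\mathcal D$, but $\mathcal D$ contains functions (like $1$) not in $C_0$; the passage requires either choosing $\mathcal D$ so that $\varphi\chi_n\in\mathcal D$ whenever $\varphi\in\mathcal D$ (e.g.\ take $\mathcal D$ to be a countable $\bQ$-algebra containing the $\chi_n$ and $1$), or arguing separately via $P_t(\varphi\chi_n)\to P_t(\varphi)$ and $\E(\varphi(X_t)\chi_n(X_t)\mid\cG_t)\to\E(\varphi(X_t)\mid\cG_t)$ for a suitable approximating sequence in $C_c$. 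This is routine and does not affect the validity of your plan.
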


\begin{proof}
Statement (i) is shown in \cite{Y}. Thus (ii) holds if $f=g(t,\frz)\varphi(x)$ 
for bounded $\mathcal O_{\cG}\otimes\cZ$-measurable functions $g$ 
on $\Omega\times\bR_+\times\frZ$ 
and bounded Borel functions $\varphi$ on $\bR^d$. 
Hence by a standard monotone class argument 
we get (ii) under the additional assumption that $f$ is bounded. 
In the general case, 
the set $A\subset\Omega\times\bR_+\times\frZ$ 
where 
$$
\int_{\bR^d}|f(t,\frz,x)|P_t(dx)=\infty
$$
is in $\mathcal{O}_{\cG}\otimes\cZ$. Consequently, 
$P_t(f(t,\frz))$ is $\cO_{\cG}\otimes\cZ$-measurable in 
$(\omega,t,\frz)$. We have 
$$
\E(|f(t,\frz, X_t)|\wedge n|\cG_t)=\int_{\bR^d}|f(t,\frz,x)|\wedge n\,P_t(dx) \quad\text{(a.s.)}
$$
for every integer $n\geq1$.  Letting here $n\to\infty$ we get 
\begin{equation}                                                               \label{veges}
\E(|f(t,\frz, X_t)||\cG_t)=\int_{\bR^d}|f(t,\frz,x)|\,P_t(dx)\quad\text{(a.s.)},   
\end{equation}
that implies \eqref{exp}.
If $f(t_0,\frz_0, X_{t_0})$ is $\sigma$-integrable 
relative to $\cG_{t_0}$, then 
there is an increasing sequence $(\Omega_n)_{n=1}^{\infty}$ 
such that $\Omega_n\in\cG_{t_0}$, 
$P(\cup_{n=1}^{\infty}\Omega_n)=1$, and 
$$
{\bf1}_{\Omega_n}\int_{\bR^d}f(t_0,\frz_0,x)\,P_{t_0}(dx)
=\E({\bf1}_{\Omega_n}f_n(t_0,\frz_0, X_{t_0})|\cG_{t_0})
$$
is almost surely finite for every $n\geq1$.
\end{proof}
\begin{corollary}                                                                       \label{corollary mp}
Let $(\Omega, \cF, P, (\cG_t)_{t\geq0})$ and $(X_t)_{t\in[0,T]}$ 
be a filtered probability space 
and a stochastic process, respectively, 
satisfying the conditions in Lemma \ref{lemma pmeasure}. 
Let $(\cF_t)_{t\geq0}$ be a filtration such that $\cG_t\subset\cF_t\subset\cF$ 
for $t\geq0$. Let $Q$ be a probability measure on $\cF$ such that 
$dQ=\gamma_TdP$ for a $\cF_T$-measurable positive random variable 
$\gamma_T$. 
Then the following statements hold. 
\begin{enumerate}
\item[(i)] There is an $\bM(\bR^d)$-valued weakly cadlag 
stochastic process $(\mu_t)_{t\in[0,T]}$ 
such that for every bounded real-valued Borel function 
$\varphi$ on $\bR^d$ and for every $t\in[0,T]$
\begin{equation}                                                                           \label{mu}
\mu_t(\varphi)=\E_Q(\gamma^{-1}_T\varphi(X_t)|\cG_t)
=\E_Q(\gamma^{-1}_t\varphi(X_t)|\cG_t)\,({\rm{a.s.}}).
\end{equation}
\item[(ii)]
Let $f=f(\omega,t,\frz,x)$ be a 
$\mathcal O_{\cG}\otimes\cZ\otimes\cB(\bR^d)$-measurable real  function on 
$\Omega\times[0,T]\times\frZ\times\bR^d$. Define
$$
\mu_t(f(t,\frz)):=\begin{cases} \int_{\bR^d}f(t,\frz,x)\,\mu_t(dx),  
\text{ for $(t,\omega,\frz)$, if } 
\int_{\bR^d}|f(t,\frz,x)|\,\mu_t(dx)<\infty\\
\infty \,\,\text{elsewhere.}\end{cases}
$$
Then $\mu_t(f(t,\frz))$ 
is a $\mathcal O_{\cG}\otimes\cZ$-measurable function 
such that for each $(t,\frz)$ we have 
\begin{equation}                                                                                        \label{muf}
\E_Q(\gamma^{-1}_Tf(t,\frz,X_t)|\cG_t)=
\E_Q(\gamma^{-1}_tf(t,\frz,X_t)|\cG_t)=\mu_t(f(t,\frz))\quad \text{(a.s.)}.  
\end{equation}
Moreover, $\mu_{t_0}(f(t_0,\frz_0))$ is finite (a.s.) 
for a $(t_0,\frz_0)\in[0,T]\times\frZ$ 
if $\gamma^{-1}_{t_0}f(t_0,\frz_0,X_{t_0})$ 
is $\sigma$-integrable (with respect to $Q$) relative to $\cG_{t_0}$, 
or equivalently, if $f(t_0,\frz_0,X_{t_0})$ is $\sigma$-integrable 
with respect to $P$ relative to $\cG_{t_0}$.
\end{enumerate}
\end{corollary}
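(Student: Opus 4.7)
The plan is to reduce everything to Lemma \ref{lemma pmeasure} via the Bayes formula. The key intermediate object is the cadlag positive $\cG_t$-martingale
$$
\lambda_t := \E(\gamma_T | \cG_t), \qquad t \in [0,T],
$$
which admits a cadlag version because $(\cG_t)$ is right-continuous and $P$-complete, and which is almost surely strictly positive since $\gamma_T > 0$. Absorbing the exceptional $P$-null set into $\cG_0$, we may take $\lambda_t(\omega) > 0$ for all $(\omega,t)$.

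For part (i), I would first invoke Lemma \ref{lemma pmeasure}(i) to obtain a $\bP(\bR^d)$-valued weakly cadlag process $(P_t)_{t\in[0,T]}$ with $P_t(\varphi) = \E(\varphi(X_t)|\cG_t)$ a.s., and then set $\mu_t := \lambda_t^{-1} P_t \in \bM(\bR^d)$. Since $\lambda_\cdot$ is cadlag and strictly positive, $(\mu_t)$ inherits the weakly cadlag $\bM(\bR^d)$-valued property. Bayes' formula yields
$$
\E_Q(\gamma_T^{-1}\varphi(X_t)|\cG_t) = \frac{\E(\gamma_T \cdot \gamma_T^{-1} \varphi(X_t) | \cG_t)}{\E(\gamma_T | \cG_t)} = \lambda_t^{-1} P_t(\varphi) = \mu_t(\varphi)\quad({\rm a.s.}),
$$
which is the first equality in \eqref{mu}. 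The second equality in \eqref{mu} follows by the tower property once one notes, again by Bayes, that $\E_Q(\gamma_T^{-1}\varphi(X_t)|\cF_t) = \gamma_t^{-1}\varphi(X_t)$ with $\gamma_t := \E(\gamma_T|\cF_t)$, and then conditioning on $\cG_t \subset \cF_t$.

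For part (ii), the identity $\mu_t = \lambda_t^{-1} P_t$ immediately gives that $\int_{\bR^d} |f(t,\frz,x)|\, \mu_t(dx) < \infty$ if and only if $\int_{\bR^d} |f(t,\frz,x)|\, P_t(dx) < \infty$, and in that case $\mu_t(f(t,\frz)) = \lambda_t^{-1} P_t(f(t,\frz))$. The $\cO_{\cG} \otimes \cZ$-measurability of $\mu_t(f(t,\frz))$ (including on the divergence set, which belongs to $\cO_{\cG} \otimes \cZ$ by Lemma \ref{lemma pmeasure}(ii)) then follows from that of $\lambda$ and of $P_t(f(t,\frz))$. Identity \eqref{muf} is again Bayes combined with Lemma \ref{lemma pmeasure}(ii), and the equality with the $\gamma_t^{-1}$ version follows by the same tower argument as in (i). For the last assertion, note that for any $\Omega_n \in \cG_{t_0}$ one has $\E_Q|\gamma_T^{-1} f(t_0,\frz_0,X_{t_0}) {\bf 1}_{\Omega_n}| = \E |f(t_0,\frz_0,X_{t_0}) {\bf 1}_{\Omega_n}|$, so the two notions of $\sigma$-integrability coincide; Lemma \ref{lemma pmeasure}(ii) then gives a.s. finiteness of $P_{t_0}(f(t_0,\frz_0))$, and dividing by $\lambda_{t_0} > 0$ transfers this to $\mu_{t_0}(f(t_0,\frz_0))$.

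The only real obstacle I anticipate is bookkeeping: controlling the $P$-null sets on which $\lambda_t$ could vanish or fail to be cadlag, and the $(\omega,t,\frz)$-exceptional sets on which the integrals may be infinite, so as to produce a process $(\mu_t)$ that is \emph{everywhere} well-defined as a weakly cadlag $\bM(\bR^d)$-valued process. Given the completeness and right-continuity of $(\cG_t)$ these are standard manipulations; beyond them, the corollary is a direct consequence of Lemma \ref{lemma pmeasure} and Bayes' theorem.
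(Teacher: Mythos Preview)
Your proposal is correct and follows essentially the same route as the paper: the paper defines $\mu_t:=({}^{o}\!\gamma_t)^{-1}P_t$ using the $\cG_t$-optional projection ${}^{o}\!\gamma$ of the $P$-martingale $\gamma_t=\E_P(\gamma_T\mid\cF_t)$, which for each fixed $t$ coincides with your $\lambda_t=\E(\gamma_T\mid\cG_t)$, and then runs the identical Bayes/tower computation. The only cosmetic difference is that the paper invokes the optional-projection language (and the uniform integrability of $\{\gamma_\tau\}$) to get cadlag paths for ${}^{o}\!\gamma$, whereas you invoke Doob's regularisation of the $\cG_t$-martingale $\lambda$ directly; both are standard and yield the same object.
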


\begin{proof}
Considering $(\cF_{t+})_{t\geq0}$ in place of $(\cF_{t})_{t\geq0}$ we may assume 
in the proof that $(\cF_{t})_{t\geq0}$ is right-continuous. 
By Doob's theorem there is a cadlag $\cF_{t}$-martingale, $(\gamma_t)_{t\in[0,T]}$,  
such that $\gamma_t=\E_P(\gamma_T|\cF_{t})$ ($P$-a.s) for each $t\in[0,T]$. 
Clearly, almost surely $\gamma_t>0$ for all $t\in[0,T]$ since  
$$
0=\E_P({\bf1}_{\gamma_t=0}\gamma_t)=\E_P({\bf1}_{\gamma_t=0}\gamma_T)
$$
implies $P(\gamma_t=0)=0$ for every $t\in[0,T]$.
Thus $(\gamma^{-1}_t)_{t\in[0,T]}$ is a cadlag  process, and it is an $\cF_t$-martingale 
under $Q$, because  
$$
\E_Q(\gamma^{-1}_T|\cF_t)=1/\E_P(\gamma_T|\cF_t)=\gamma^{-1}_t\quad\text{almost surely for $t\in[0,T]$}.
$$
Since, $\gamma=(\gamma)_{t\in[0,T]}$ 
is a (cadlag) $\cF_t$-martingale under $P$, the set $\{\gamma_{\tau}\}$ 
for $\cF_t$-stopping times $\tau\leq T$ is 
uniformly $P$-integrable, and hence one knows that $^o\!\gamma$, the $\cG_t$-optional 
projection of $\gamma$ under $P$, is a cadlag process. Due to $\gamma>0$, 
we have $^o\!\gamma>0$ (a.s.). 
Define $\mu_t:=(^o\!\gamma_t)^{-1}P_t$ 
for $t\in[0,T]$, where $(P_t)_{t\in[0,T]}$ is the $\bP(\bR^d)$-valued $\cG_t$-adapted cadlag 
process (in the topology of weak convergence of measures) by 
Lemma \ref{lemma pmeasure}. 
Hence, $(\mu_t)_{t\in[0,T]}$ is a $\cG_t$-adapted cadlag 
$\bM(\bR^d)$-valued process, and by \eqref{pi} for every bounded 
Borel function $\varphi$ on $\bR^d$ 
we have 
$$
\E_Q(\gamma^{-1}_T\varphi(X_t)|\cG_t)=\E_P(\varphi(X_t)|\cG_t)/\E_P(\gamma_T|\cG_t)
$$
$$
=\E_P(\varphi(X_t)|\cG_t)(^o\!\gamma_t)^{-1}=(^o\!\gamma_t)^{-1}P_t(\varphi)=\mu_t(\varphi)
\quad\text{(a.s) for each $t\in[0,T]$}.    
$$
On the other hand, by well-known properties of conditional expectations 
$$
\E_Q(\gamma^{-1}_T\varphi(X_t)|\cG_t)=
\E_Q(\E_Q(\gamma^{-1}_T\varphi(X_t)|\cF_t)|\cG_t)
$$
$$
=\E_Q(\varphi(X_t)\E_Q(\gamma^{-1}_T|\cF_t)|\cG_t)=
\E_Q(\gamma^{-1}_t\varphi(X_t))|\cG_t), 
$$ 
which completes the proof of (i). 
To prove (ii), note 
that the function $\mu_t(f(t,\frz))$ is $\cO_\cG\otimes\cZ$-measurable  
in $(\omega,t,\frz)$,
and by \eqref{exp} for each $(t,\frz)$ almost surely 
$$
\mu_t(f(t,\frz))=({^o\!\gamma_t})^{-1}P_t(f(t,\frz))
=\E(({^o\!\gamma_t})^{-1}f(t,\frz, X_t)|\cG_t)
$$
$$
=\E_Q(\gamma^{-1}_T({^o\!\gamma_t})^{-1}f(t,\frz, X_t)|\cG_t)/\E_Q(\gamma^{-1}_T|\cG_t)=
\E_Q(\gamma^{-1}_T({^o\!\gamma_t})^{-1}f(t,\frz, X_t)|\cG_t){^o\!\gamma_t}
$$
$$
=\E_Q(\gamma^{-1}_Tf(t,\frz, X_t)|\cG_t)=\E_Q(\gamma^{-1}_tf(t,\frz, X_t)|\cG_t), 
$$
where the last equation holds because 
$\gamma^{-1}$ is an $\cF_t$-martingale under $Q$. 
We finish the proof with the obvious observation that  
$\gamma^{-1}_{t_0}f(t_0,\frz_0,X_{t_0})$ is $\sigma$-integrable 
with respect to $Q$ relative to $\cG_{t_0}$
if $f(t_0,\frz_0,X_{t_0})$ is $\sigma$-integrable with respect to 
$P$ relative to $\cG_{t_0}$. 
 \end{proof}

\mysection{Proof of Theorem \ref{theorem Z1}}

Recall that by Assumption \ref{assumption Girsanov} 
the measure $Q$, defined by 
$dQ=\gamma_TdP$ is a probability measure, equivalent to $P$, 
and by Girsanov's theorem under $Q$ the process 
$(W_t,\tilde V_t)_{t\in[0,T]}$, where $(\tilde V_t)_{t\in[0,T]}$ 
is defined by \eqref{tilde Wiener}, is a $d_1+d'$-dimensional 
$\cF_t$-Wiener process. 
Moreover, under $Q$ the random measures 
$\tilde N_0$ and $\tilde N_1$ remain 
independent $\cF_t$-Poisson martingale measures, 
with characteristic measures $\nu_0$ 
and $\nu_1$, respectively. 
Clearly, $(\gamma_t)_{t\in[0,T]}$ is an 
$\cF_t$-martingale under $P$.  
By It\^o's formula 
\begin{equation}                                                                          \label{1.4.5.22}
d\gamma_t^{-1}=\gamma^{-1}_tB^l_t(X_t)\,d\tilde V^l_t,  
\end{equation}
the process $\gamma^{-1}=(\gamma^{-1}_t)_{t\in[0,T]}$ i
s an $\F_t$-local martingale under $Q$. 
Hence, taking into account $\E_Q\gamma_T^{-1}=1$, 
we get that $\gamma^{-1}$ is 
an $\cF_t$-martingale under $Q$. Thus the Bayes formula 
for bounded Borel functions $\varphi$ on $\bR^d$ gives  
\begin{equation}                                                            \label{Bayes}
\E(\varphi(X_t)|\cF^{Y}_t)
=\frac{\E_Q(\gamma_T^{-1}\varphi(X_t)|\cF^{Y}_t)}
{\E_Q(\gamma_T^{-1}|\cF^{Y}_t)}
=\frac{\E_Q(\gamma_t^{-1}\varphi(X_t)|\cF^{Y}_t)}
{\E_Q(\gamma_t^{-1}|\cF^{Y}_t)} 
\,\,({\rm{a.s}}.), 
\end{equation}
often referred as Kallianpur-Striebel formula in the literature. 
Using $\tilde{V}$ we can rewrite  system \eqref{system_1} 
in the form
\begin{align}
dX_t=& b(t,Z_t)\,dt + \sigma(t,Z_t)\,dW_t + \rho(t,Z_t)\,dV_t                 \nonumber\\
 & +\int_{\frZ_0}\eta(t,Z_{t-},\frz)\,\tilde N_0(d\frz,dt) 
 + \int_{\frZ_1}\xi(t,Z_{t-},\frz)\, \tilde N_1(d\frz,dt) \nonumber\\
    dY_t =& d\tilde V_t + \int_{\frZ_1}\frz\,\tilde{N}_1(dt,d\frz),                  \label{SDE2} 
\end{align}
which shows, in particular, that $(Y_t)_{t\in[0,T]}$ is a L\'evy process 
under $Q$, 
and hence it is well-known that 
the filtration $(\cF_t^{Y})_{t\in[0,T]}$ 
is right-continuous. Thus we can apply 
Lemma \ref{lemma pmeasure} and 
Corollary \ref{corollary mp} with the 
unobservable process $(X_t)_{t\in[0,T]}$ 
and the filtration $(\cG_t)_{t\in[0,T]}=(\cF_t^Y)_{t\in[0,T]}$ 
to have a $\bP$-valued and $\bM$-valued 
weakly cadlag $\cF^{Y}_t$-adapted processes $P_t(dx)$ and $\mu_t(dx)$, 
respectively,  such that for every bounded 
Borel function $\varphi$ on 
$\bR^d$ for each $t\in[0,T]$ we have 
$$
P_t(\varphi)=\E(\varphi(X_t)|\cF^{Y}_t), 
\quad 
\mu_t(\varphi)=\E_Q(\gamma^{-1}_t\varphi(X_t)|\cF^{Y}_t)\,\,({\rm{a.s.}}), 
$$
and by \eqref{Bayes} it follows that almost surely 
$P_t=\mu_t/\mu_t(\bf 1)$ for all $t\in[0,T]$.
To get an equation for $d\mu_t(\varphi)$ 
for sufficiently smooth functions we 
calculate first the stochastic differential 
$d(\gamma^{-1}_t\varphi(X_t))$.  
\begin{proposition}                                                                              \label{proposition SD}
Let $\varphi\in C^2_b(\bR^d)$. 
Then for the stochastic differential of $\gamma^{-1}_t\varphi(X_t)$ we have
\begin{equation}                                                                                     
 \begin{split}
  d\big(\gamma^{-1}_t\varphi(X_t)\big)
 =& \gamma^{-1}_t\cL_t\varphi(X_t)\,dt 
  + \gamma^{-1}_t\cM_t^l\varphi(X_t)\,d\tilde V^l_t 
  + \gamma^{-1}_t \sigma^{ik}_t(X_t)D_i\varphi(X_t)\,dW^{k}_t                                              \\
&+\gamma^{-1}_t\int_{\frZ_0}I_t^{\eta}\varphi(X_{t-})\,\Ntn(d\frz,dt)
+\gamma^{-1}_t\int_{\frZ_1}I_t^{\xi}\varphi(X_{t-})\,\Nte(d\frz,dt) \\
&+ \gamma^{-1}_t\int_{\frZ_0}J_t^{\eta}\vp(X_t)\,\nu_0(d\frz)\,dt 
 +\gamma^{-1}_t\int_{\frZ_1}J_t^{\xi}\varphi(X_t)\,\nu_1(d\frz)\,dt.
    \end{split}                                                                                  \label{SD}
\end{equation}
\end{proposition}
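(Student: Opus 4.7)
The plan is to apply Itô's formula for cadlag semimartingales to the product $\gamma_t^{-1}\varphi(X_t)$, relying on the separately known stochastic differentials of $\gamma^{-1}$ and $\varphi(X)$. The only delicate aspects are (a) splitting the jump terms into their compensated parts plus the $J$-type residuals, and (b) tracking the change of drift that comes from passing between the $P$-Wiener process $V$ and the $Q$-Wiener process $\tilde V$.

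First, since $\varphi\in C^2_b(\bR^d)$, I would apply the standard Itô formula for jump diffusions to $\varphi(X_t)$ using the SDE \eqref{system_1} for $X_t$. The continuous part yields
\[
b^i_t D_i\varphi(X_t)\,dt + \tfrac{1}{2}(\sigma^{ik}_t\sigma^{jk}_t+\rho^{il}_t\rho^{jl}_t)D_{ij}\varphi(X_t)\,dt + \sigma^{ik}_tD_i\varphi(X_t)\,dW^k_t + \rho^{il}_tD_i\varphi(X_t)\,dV^l_t,
\]
the first two terms of which assemble into $\cL_t\varphi(X_t)\,dt$. For each jump component I would write the natural increment $\varphi(X_{t-}+\zeta)-\varphi(X_{t-})$ against $\tilde N_i$ and add the Taylor remainder $\varphi(X_{t-}+\zeta)-\varphi(X_{t-})-\zeta^iD_i\varphi(X_{t-})$ against the compensator $\nu_i(d\frz)\,dt$; by \eqref{IJ} these are precisely $I^\eta_t\varphi(X_{t-})$, $I^\xi_t\varphi(X_{t-})$ and $J^\eta_t\varphi(X_t)$, $J^\xi_t\varphi(X_t)$ (with $X_{t-}$ replaceable by $X_t$ under the continuous measure $\nu_i(d\frz)\,dt$). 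The hypothesis $\varphi\in C^2_b$ together with Assumption \ref{assumption SDE}(i) ensures the $L_2$-integrability of $I^\eta_t\varphi,I^\xi_t\varphi$ against $\tilde N_0,\tilde N_1$ and the $L_1$-integrability of $J^\eta_t\varphi,J^\xi_t\varphi$ against the compensators, so the Itô expansion is well posed.

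Next, from \eqref{1.4.5.22} the process $\gamma^{-1}$ is continuous, with $d\gamma_t^{-1}=\gamma_t^{-1}B^l_t(X_t)\,d\tilde V^l_t$, equivalently $\gamma_t^{-1}B^l_t\,dV^l_t + \gamma_t^{-1}|B_t|^2\,dt$ under $P$. Applying the product rule for semimartingales,
\[
d(\gamma_t^{-1}\varphi(X_t)) = \gamma_{t-}^{-1}\,d\varphi(X_t) + \varphi(X_{t-})\,d\gamma_t^{-1} + d[\gamma^{-1},\varphi(X)]^c_t,
\]
and noting that the bracket has no jump part because $\gamma^{-1}$ is continuous, I would compute the covariation from the unique common noise in the continuous martingale parts of the two factors, namely $V$; this gives
\[
d[\gamma^{-1},\varphi(X)]^c_t = \gamma_t^{-1}\rho_t^{il}(X_t)D_i\varphi(X_t)B_t^l(X_t)\,dt.
\]

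Finally, I would substitute $dV^l = d\tilde V^l - B^l_t\,dt$ inside the $\rho^{il}_tD_i\varphi\,dV^l$ term from $d\varphi(X_t)$. The resulting drift correction $-\gamma^{-1}\rho^{il}D_i\varphi\,B^l\,dt$ cancels exactly against $d[\gamma^{-1},\varphi(X)]^c_t$, while the remaining $\gamma^{-1}\rho^{il}D_i\varphi\,d\tilde V^l$ combines with $\varphi(X_{t-})\,d\gamma_t^{-1}=\gamma_t^{-1}\varphi(X_t)B_t^l\,d\tilde V^l$ (since $X_{t-}=X_t$ $d\tilde V^l$-a.e.) into $\gamma^{-1}(\rho^{il}D_i\varphi+B^l\varphi)\,d\tilde V^l=\gamma^{-1}\cM^l_t\varphi(X_t)\,d\tilde V^l_t$. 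Collecting all pieces yields \eqref{SD}. The main obstacle is essentially bookkeeping: correctly identifying the covariation under $P$ and verifying the two-term cancellation that produces the clean $\cM^l_t\varphi\,d\tilde V^l$ coefficient. No deeper analytic difficulty arises, as all integrability conditions needed for the Itô expansion follow directly from $\varphi\in C^2_b(\bR^d)$ and Assumption \ref{assumption SDE}(i).
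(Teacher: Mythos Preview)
Your proposal is correct and follows essentially the same approach as the paper: apply It\^o's formula to $\varphi(X_t)$, use the product rule with $d\gamma_t^{-1}=\gamma_t^{-1}B^l_t\,d\tilde V^l_t$, compute the covariation $\gamma_t^{-1}\rho_t^{il}B_t^lD_i\varphi\,dt$, and observe the cancellation that leaves $\gamma_t^{-1}\cM_t^l\varphi\,d\tilde V_t^l$. The only cosmetic difference is that the paper performs the substitution $dV^l=d\tilde V^l-B^l\,dt$ already when writing $d\varphi(X_t)$ (so the extra drift $-\rho^{il}B^lD_i\varphi\,dt$ appears there), whereas you keep $dV^l$ until the final step; the cancellation is the same either way.
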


\begin{proof}
By It\^o's formula, see for example in \cite{A} or \cite{IW}, 
for $\varphi\in C_b^{2}(\bR^d)$ we have 
\begin{align*}
d\varphi(X_t)=
& \left(\cL_t\varphi(X_t)-\rho^{il}_tB^{l}_t(X_t)D_i\varphi(X_t)\right)\,dt              \\
&+ \sigma_t^{ik}(X_t)D_i\varphi(X_t)\,dW^k_t 
+ \rho_t^{il}(X_t)D_i\varphi(X_t)\,d\tilde V^l_t          \\
&+ \int_{\frZ_0} I_t^{\eta}\varphi(X_{t-})\,\tilde N_0(dt,d\frz)
+ \int_{\frZ_1} I_t^{\xi}\varphi(X_{t-})\,\tilde N_1(dt,d\frz)\\
&+\int_{\frZ_0} J_t^{\eta}\varphi(X_{t-})\,\nu_0(d\frz)dt + \int_{\frZ_1} J_t^{\xi}\varphi(X_{t-})\,\nu_1(d\frz)dt, 
\end{align*}
where we use the notations introduced before the formulation of Theorem \ref{theorem Z1}. 
Hence using \eqref{1.4.5.22} and 
the stochastic differential rule for products, 
$$
d(\gamma_t^{-1}\varphi(X_t))=\gamma^{-1}_{t}d\varphi(X_t)+\varphi(X_{t-})\,d\gamma_t^{-1}
+
d\gamma^{-1}_{t}d\varphi(X_t),
$$
where
$$
d\gamma^{-1}_{t}d\varphi(X_t)=\gamma^{-1}_t\rho_t^{il}B_t^l(X_t)D_i\varphi(X_t)\,dt, 
$$
we obtain \eqref{SD}.
\end{proof}
To calculate the conditional expectation (under $Q$) of the terms 
in the equation for $\gamma_t^{-1}\varphi(X_t)$, given $\cF_t^{Y}$, 
we describe below the structure of $\cF_t^{Y}$. 
For each $t\geq0$ we denote by $\cF^{\tilde N}_t$ the $P$-completion of the  
$\sigma$-algebra generated by the random variables 
$N_1((0,s]\times\Gamma)$ for $s\in(0,t]$ and $\Gamma\in\cZ_1$ 
such that $ \nu_1(\Gamma)<\infty$. 

\begin{lemma}                                                            \label{lemma sa}
For every $t\in[0,T]$ we have 
\begin{equation*}
    \F_t^Y = \F_0^Y \vee {\F_t^{\tilde{V}}} \vee \F_t^{\Nte}, 
\end{equation*}
where $\F_0^Y \vee {\F_t^{\tilde{V}}} \vee \F_t^{\Nte}$ 
denotes the $P$-completion of the 
smallest $\sigma$-algebra containing  $\F_0^Y$, 
$\F_t^{\tilde{V}}$  and $\F_t^{\tilde N_1}$. 
\end{lemma}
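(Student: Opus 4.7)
The plan is to exploit the pathwise decomposition
$$
Y_t = Y_0 + \tilde V_t + \int_0^t\int_{\frZ_1}\frz\,\tilde N_1(d\frz,ds)
$$
from \eqref{SDE2}, in which $\tilde V$ is continuous and the stochastic integral is a pure jump process whose jumps coincide with the atoms of $N_1$. Both inclusions then follow by separating the continuous and purely discontinuous parts of $Y$.

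For the inclusion $\F_t^Y\subset \F_0^Y\vee\F_t^{\tilde V}\vee\F_t^{\tilde N_1}$, I would show that each $Y_s$, $s\le t$, is measurable with respect to the right-hand side. The variables $Y_0$ and $\tilde V_s$ are immediate, and for the compensated Poisson integral I would approximate the integrand $\frz$ by $\frz{\bf 1}_{A_n}$ with $A_n:=\{\frz\in\frZ_1:|\frz|>1/n\}$, for which $\nu_1(A_n)<\infty$ and $\frz$ is $\nu_1$-integrable on $A_n$ (by Assumption \ref{assumption SDE}(i)). On each $A_n$,
$$
\int_0^s\int_{A_n}\frz\,\tilde N_1(d\frz,du)
=\int_0^s\int_{A_n}\frz\,N_1(d\frz,du)-s\int_{A_n}\frz\,\nu_1(d\frz)
$$
is manifestly $\F_s^{\tilde N_1}$-measurable, and the full integral is recovered as the $L^2$-limit of these truncations, with the exceptional null set absorbed by the $P$-completion built into the definition of $\F_t^{\tilde N_1}$.

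For the reverse inclusion I would reconstruct each of the three generators from the path of $Y$. Since $\tilde V$ is continuous and each compensator $s\mapsto s\int_{A_n}\frz\,\nu_1(d\frz)$ is continuous, the jumps of $Y$ coincide with the jumps of the pure jump part, and at an atom $u$ of $N_1$ with mark $\frz$ one has $\Delta Y_u=\frz$. This yields the pathwise identification
$$
N_1((0,s]\times\Gamma)=\#\{u\in(0,s]:\Delta Y_u\in\Gamma\},\qquad s\le t,\ \Gamma\in\cZ_1,\ \nu_1(\Gamma)<\infty,
$$
whose right-hand side is a measurable functional of the cadlag trajectory $(Y_u)_{u\le t}$, so that $\F_t^{\tilde N_1}\subset\F_t^Y$. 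Combining this with the previous paragraph, the integral $\int_0^s\int_{\frZ_1}\frz\,\tilde N_1(d\frz,du)$ is $\F_t^{\tilde N_1}$-measurable, so
$$
\tilde V_s=Y_s-Y_0-\int_0^s\int_{\frZ_1}\frz\,\tilde N_1(d\frz,du)
$$
is $\F_t^Y$-measurable; together with the trivial inclusion $\F_0^Y\subset\F_t^Y$ this closes the reverse containment.

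The main obstacle is that $\frz$ need not be $\nu_1$-integrable near zero, so the compensated integral cannot be split globally into an $N_1$-part and a deterministic drift. Hence both the measurability step and the jump-identification step must be routed through the truncations $A_n$ and an $L^2$-passage to the limit; apart from this L\'evy--It\^o--style bookkeeping, the argument is routine.
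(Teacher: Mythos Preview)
Your proposal is correct and follows essentially the same route as the paper: both obtain the forward inclusion directly from \eqref{SDE2} and the reverse by identifying the jump measure of $Y$ with $N_1$ (the paper phrases this as $N^Y((0,t]\times A)=N_1((0,t]\times A)$, proved via the same truncation to sets of finite $\nu_1$-measure you describe), then recovering $\tilde V$ by subtraction. Your account of the $L^2$-truncation is slightly more explicit than the paper's, but the underlying argument is the same.
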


\begin{proof}
From \eqref{SDE2} it immediately follows that
\begin{equation*}
    \F_t^Y \subseteq \F_0^Y \vee {\F_t^{\tilde{V}}} \vee \F_t^{\Nte}.
\end{equation*}
To prove the reversed inclusion, we claim 
\begin{equation}                                                                                \label{NN1}
N^Y((0,t]\times A)=N_1((0,t]\times A)
\quad
\text{almost surely for all $t\in [0,T]$}
\end{equation}
for every $A\in\cZ_1$, where $N^Y$ is the measure of jumps for the process $Y$. Clearly, 
$N^Y(d\frz,dt)=N^M(d\frz,dt)$, where $N^M$ is the measure of jumps  
for the process
$$
M_t=\int_0^t\int_{\frZ_1}\frz\,\tilde N_1(d\frz,dt), \quad t\geq0, 
$$
i.e., 
$$
N^Y((0,t]\times A)=\sum_{0<s\leq t}{\bf1}_{A}(\Delta Y_s)
=\sum_{0<s\leq t}{\bf1}_{A}(\Delta M_s)
\quad A\in\cZ_1. 
$$
To show \eqref{NN1} let $A=A_0$ be a set from $\cZ_1$ such that $\nu_1(A_0)<\infty$. 
Then 
$$
M^{A_0}_t:=\int_0^t\int_{A_0}\frz\,\tilde N_1(d\frz,ds)=
\sum_{0<s\leq t}p_s{\bf1}_{A_0}(p_s)-t\int_{A_0}\frz\,\nu_1(d\frz), 
$$
where $(p_t)_{t\in [0,T]}$ is the Poisson point process associated with $N_1$. 
Hence for $N^0(d\frz,dt)$, the measure 
of jumps of the process $M^{A_0}$, we have that almost surely 
\begin{equation}                                                                                                    \label{01}
N^0((0,t]\times A_0)=N_1((0,t]\times A_0)
\quad
\text{for all $t\in[0,T]$}. 
\end{equation}
It is not difficult to see that $N^0((0,t]\times A_0)=N^M((0,t]\times A_0)$. 
Hence \eqref{NN1} for $A=A_0$ follows. 

Since $\nu_1$ is $\sigma$-finite, for an arbitrary $B\in\cZ_1$ 
there is a sequence $(B_n)_{n=1}^{\infty}$ 
of disjoint sets $B_n\in\cZ_1$ such that $B=\bigcup_{n=1}^{\infty}B_n$ 
and $\nu_1(B_n)<\infty$ for 
each $n\geq1$. Thus for each integer $n\geq 1$ we have \eqref{NN1} 
with $B_n$ in place of $A$, and 
summing this up over $n\geq1$ and using the $\sigma$-additivity of $N^Y$ 
and $N_1$ we obtain 
\eqref{NN1} with $B$ in place of $A$. Noting that $\cF^Y_t$ 
contains the $\sigma$-algebra 
generated by $N^Y((0,s]\times B)$ for each $s\leq t$ and $B\in\cZ$, 
we see that $\cF^Y_t\supset \cF_t^{\tilde N_1}$. 
Clearly, $\cF_t^Y\supset \cF^Y_0$,  and taking into account  
$$
Y_t-Y_0-\int_0^t\int_{\frZ_1}\frz\,N_1(d\frz,ds)=\tilde V_t, 
\quad 
\text{for $t\in[0,T]$},
$$
we get $\cF^Y_t\supset \cF_t^{\tilde V}$. Consequently, 
\begin{equation*}
    \F_t^Y \supseteq \F_0^Y \vee {\F_t^{\bar{V}}} \vee \F_t^{\tilde N}, 
\end{equation*}
that completes the proof. 
\end{proof}

The above lemma is an essential tool in obtaining the filtering equations. 
A similar lemma in a more general setting 
in some directions is presented in  \cite{QD2015} and \cite{Q2019} 
to obtain the filtering equations 
for the model presented in these papers.  It seems to us, however, that this 
lemma, Lemma 3.2 in  \cite{QD2015}, used as well in \cite[p.4]{Q2019}, 
may not hold under the general conditions 
formulated in these papers, since it is not true in simple cases of 
vanishing coefficients in the equation forof the observation noises. It is worth noticing that 
when instead 
of the integrand $\frz$ a stochastic integrand depending on $Z_t=(X_t,Y_t)$ is 
considered in the observation process $Y$, the integral of such a term against 
a Poisson random measure may fail to be a L\'evy process. In particular, 
it may not have independent increments, which is a crucial property 
for the filtration generated by the observation.

Now we are going to get an equation for $\mu(\varphi)$ by noting that by 
Proposition \ref{proposition SD} we have 
\begin{equation}                                                                                                 \label{Ito}
\gamma^{-1}_t\varphi(X_t)=\varphi(X_0)
+\alpha_t+\alpha^{0}_t+\alpha^{1}_t+\beta_t^{0}+\beta_t^{1}+\delta_t^0+\delta^{1}_t,\quad t\in[0,T], 
\end{equation}
where 
$$
\alpha_t:=\int_0^t\gamma^{-1}_s\cL_s\varphi(X_s)\,ds, 
$$
$$
\alpha^0_t:=\int_0^t\int_{\frZ_0}\gamma^{-1}_sJ_s^{\eta}\vp(X_s)\,\nu_0(d\frz)ds, 
\quad
\alpha^1_t:=\int_0^t\int_{\frZ_1}\gamma^{-1}_sJ_s^{\xi}\varphi(X_s)\,\nu_1(d\frz)ds, 
$$
$$
\beta^0_t:=\int_0^t\gamma^{-1}_s \sigma^i_s(X_s)D_i\varphi(X_s)\,dW_s, 
\quad
\beta^1_t:=\int_0^t\gamma^{-1}_s\cM_s^l\varphi(X_s)\,d\tilde V^l_s, 
$$
$$
\delta^0_t:=\int_0^t\int_{\frZ_0}\gamma^{-1}_sI_s^{\eta}\varphi(X_{s-})\,\tilde N_0(d\frz,ds),\quad
\delta^1_t:=\int_0^t\int_{\frZ_1}\gamma^{-1}_sI_s^{\xi}\varphi(X_{t-})\,\tilde N_1(d\frz,ds), 
$$
for $\varphi\in C^2_b(\bR^d)$. 
We want to take the conditional expectation of both sides 
of equation \eqref{Ito} for each $t\in[0,T]$, under $Q$, given $\cF^Y_t$. 
In order to apply Corollary  \ref{corollary projection},   
 we should verify that the random variables 
$$
G:=\int_0^T\gamma^{-1}_s|\cL_t\varphi(X_s)|\,ds, 
$$
$$
G^{(0)}:=\int_0^T\int_{\frZ_0}
\gamma^{-1}_s|J_s^{\eta}\varphi(X_s)|\,\nu_0(d\frz)\,ds,  
\quad 
G^{(1)}:=\int_0^T\int_{\frZ_1}
\gamma^{-1}_s|J_s^{\xi}\varphi(X_s)|\,\nu_1(d\frz)\,ds,
$$
$$
F^{(0)}:=
\Big(\int_0^T\gamma^{-2}_s|\sigma^i_s(X_s)D_i\varphi(X_s)|^2\,ds
\Big)^{1/2}, 
\quad
F^{(1)}:=
\Big(\int_0^T\gamma^{-2}_s\sum_{l}|\cM_s^l\varphi(X_s)|^2\,ds
\Big)^{1/2}, 
$$
$$
H^{(0)}:=
\Big(
\int_0^T\int_{\frZ_0}\gamma^{-2}_s|I_s^{\eta}\varphi(X_{s-})|^2\,\nu_0(d\frz)ds
\Big)^{1/2}, 
$$
$$
H^{(1)}:=
\Big(
\int_0^T\int_{\frZ_1}\gamma^{-2}_s|I_s^{\xi}\varphi(X_{s-})|^2\,\nu_1(d\frz)ds
\Big)^{1/2}
$$
are $\sigma$-integrable with respect to $Q$ relative to $\cF_0^Y$, 
and that \eqref{ph} holds for $^Q\!f^{(i)}$ in place of $^o\!f$, 
and for $^Q\!h^{(i)}$ in place of $^o\!h^{(i)}$, where $^Q\!f^{(0)}$, $^Q\!f^{(1)}$,  
$^Q\!h^{(0)}$  and 
$^Q\!h^{(1)}$ are the $\cF^Y_t$-optional projection under $Q$ 
of 
$$
f^{k(0)}:=(\gamma^{-1}_s\sigma^{ik}_s(X_s)D_i\varphi(X_s))_{s\in[0,T]}, 
\quad
f^{l(1)}:=(\gamma^{-1}_s\cM_s^l\varphi(X_s))_{s\in[0,T]}, 
$$
$$
h^{(0)}:=(\gamma^{-1}_sI_s^{\eta}\varphi(X_{s-}))_{s\in[0,T]}
\quad
\text{and}
\quad
h^{(1)}:=(\gamma^{-1}_sI_s^{\xi}\varphi(X_{s-}))_{s\in[0,T]}, 
$$
respectively for each fixed $k=1,2,...,d_1$ and $l=1,...,d'$.  
For a fixed integer $n\geq1$ let $\Omega_n=\{\omega\in\Omega:|Y_0|\leq n\}$.  
Then due to Assumption \ref{assumption SDE},  
the martingale property of $(\gamma_t)_{t\in[0,T]}$  
and \eqref{bound_Z} we have 
$$
\E_Q({\bf1}_{\Omega_n}G)
\leq N\E\Big(\gamma_T\int_0^T\gamma^{-1}_t
(K_0+ K_1 {\bf1}_{\Omega_n}|Z_t| +K_2{\bf1}_{\Omega_n}|Z_t|^2)\,dt\Big)
$$
$$
=N\int_0^T\E(\gamma_T\gamma^{-1}_t
(K_0+ K_1 {\bf1}_{\Omega_n}|Z_t| +K_2{\bf1}_{\Omega_n}|Z_t|^2)\,dt
$$
$$
=N\int_0^T\E(K_0+K_1 {\bf1}_{\Omega_n}|Z_t| +K_2{\bf1}_{\Omega_n}|Z_t|^2)\,dt
$$
$$
\leq N'(K_0+K_1 \E|X_0| + K_1\E({\bf1}_{\Omega_n}|Y_0|)+  
K_2\E|X_0|^2+K_2\E({\bf1}_{\Omega_n}|Y_0|^2))<\infty
$$
with  constants $N$ and $N'$, which shows that $G$ 
is $\sigma$-integrable with respect to 
$Q$ relative to $\cF^Y_0$. 
Similarly, using the estimate
$$
|J^\eta\varphi(X_t)|\leq \sup_{x\in\bR^d}
|D_{ij}\varphi(x)||\eta^i_t(X_t)||\eta^j_t(X_t)|,
$$
we get 
$$
\E_Q({\bf1}_{\Omega_n}G^{(0)})
=\int_0^T\E\int_{\frZ_0}{\bf1}_{\Omega_n}
|J_s^{\eta}\varphi(X_s)|\,\nu_0(d\frz)ds
$$
$$
\leq N\int_0^T\E\int_{\frZ_0}{\bf1}_{\Omega_n}
|\eta(s,Z_s,\frz)|^2\,\nu_0(d\frz)\,ds
\leq N'\int_0^T\E(K_0+K_2{\bf1}_{\Omega_n}|Z_s|^2)\,ds<\infty
$$
with  constants $N$ and $N'$. 
In the same way we get 
$\E_Q({\bf1}_{\Omega_n}G^{(1)})<\infty$. 
To prove that $F^{(i)}$ and $H^{(i)}$ 
are $\sigma$-integrable (with respect to $Q$) relative to $F_0^{Y}$, 
we claim first that 
\begin{equation}                                                                       \label{A}
A_n:=\E_Q{\bf1}_{\Omega_n}\sup_{t\leq T}\gamma_t^{-1}<\infty
\quad\text{for every integer $n\geq1$}.
\end{equation}
To prove this we repeat a method used in proof of Theorem
\ref{theorem Girsanov}. From \eqref{1.4.5.22} by using the Davis inequality 
and then Young's inequality we get  
$$
\E_Q{\bf1}_{\Omega_n}\sup_{t\in[0,T]}\gamma^{-1}_{t\wedge\tau_k}
\leq
1+3\E\left(
\int_0^{T\wedge\tau_k}{\bf1}_{\Omega_n}\gamma_t^{-2}|B(t,Z_t)|^2\,dt
\right)^{1/2}
$$
$$
\leq1+\tfrac{1}{2}
\E_Q{\bf1}_{\Omega_n}\sup_{t\in[0,T]}\gamma^{-1}_{t\wedge\tau_k}
+
5\E\int_0^{T}{\bf1}_{\Omega_n}\gamma_t^{-1}|B(t,Z_t)|^2\,dt
$$
for stopping times 
$$
\tau_k=\inf\{t\in[0,T]: \gamma^{-1}_t\geq k\},
\quad\text{for integers $k\geq1$}. 
$$
Rearranging this inequality and then letting $k\to\infty$ 
by Fatou's lemma we obtain 
$$
\E_Q{\bf1}_{\Omega_n}\sup_{t\in[0,T]}\gamma^{-1}_{t}
\leq 2+10\int_0^T\E_Q{\bf1}_{\Omega_n}\gamma_t^{-1}|B(t,Z_t)|^2\,dt. 
$$
Hence we get \eqref{A} by noticing 
that using the martingale property of $\gamma$,  
the estimate in \eqref{bound_Z} and $K_2\E |X_0|^2<\infty$, 
for every $t\in[0,T]$ we have 
$$
\E_Q{\bf1}_{\Omega_n}\gamma_t^{-1}|B(t,Z_t)|^2
=\E{\bf1}_{\Omega_n}\gamma_T\gamma_t^{-1}|B(t,Z_t)|^2
=\E{\bf1}_{\Omega_n}|B(t,Z_t)|^2
$$
$$
\leq K_0+K_2\E{\bf1}_{\Omega_n}|Z_t|^2
\leq K_0+K_2N\E(1+|X_0|^2+{\bf1}_{\Omega_n}|Y_0|^2)<\infty. 
$$
Consequently, 
$$
\E_Q({\bf1}_{\Omega_n}F^{(0)})
\leq 
\E_Q
\left(
{\bf1}_{\Omega_n}\sup_{s\leq T}\gamma^{-1/2}_s
\Big(
\int_0^T{\bf1}_{\Omega_n}
\gamma^{-1}_s|\sigma^i_s(X_s)D_i\varphi(X_s)|^2\,ds
\Big)^{1/2}
\right)
\leq A_n+B_n, 
$$
with $A_n<\infty$, and  
$$
B_n:=\E_Q\int_0^T{\bf1}_{\Omega_n}
\gamma^{-1}_s|\sigma^i_s(X_s)D_i\varphi(X_s)|^2\,ds
=\int_0^T\E({\bf1}_{\Omega_n}
\gamma_T\gamma^{-1}_s|\sigma^i_s(X_s)D_i\varphi(X_s)|^2)\,ds
$$
$$
=\int_0^T\E|{\bf1}_{\Omega_n}\sigma^i_s(X_s)D_i\varphi(X_s)|^2\,ds
\leq N\int_0^T\E(K_0+K_2{\bf1}_{\Omega_n}|Z_s|^2)\,ds<\infty.
$$
We get $\E_Q({\bf1}_{\Omega_n}F^{(1)})<\infty$ in the same way. 
Similarly, 
$\E_Q({\bf1}_{\Omega_n}H^{(0)})\leq A_{n}+C_{n}$, 
with $A_{n}$ given in \eqref{A} and 
$$
C_{n}:=\E_Q
\int_0^T\gamma^{-1}_s\int_{\frZ_0}{\bf1}_{\Omega_n}|
I_s^{\eta}\varphi(X_s)|^2\,\nu_0(d\frz)ds
=\int_0^T\E\int_{\frZ_0}{\bf1}_{\Omega_n}
|I_s^{\eta}\varphi(X_s)|^2\,\nu_0(d\frz)ds
$$
$$
\leq N\int_0^T\E\int_{\frZ_0}{\bf1}_{\Omega_n}
|\eta(s,Z_s,\frz)|^2\,\nu_0(d\frz)ds
\leq N'\int_0^T\E(K_0+K_2{\bf1}_{\Omega_n}|Z_s|^2)\,ds<\infty
$$
with constants $N$ and $N'$, where we use that by Taylor's formula we have 
$$
|I_s^{\eta}\varphi(X_s)|\leq \sup_{x\in\bR^d}|D_i\varphi(x)||\eta^i_s(X_s)|.
$$
In the same way we have $\E_Q({\bf1}_{\Omega_n}H^{(1)})<\infty$. 
For processes $h=(h_t)_{t\in[0,T]}$ let 
$^Q\!h$ and  ${^{o}}\!h$ denote the $\cF^Y_t$-optional projections 
of $h$ under $Q$ and under $P$, 
respectively. Then 
using the formula $^Q\!h={^{o}\!(\gamma h)}/^{o}\!\gamma$,  
well-known properties of optional 
projections and Remark \ref{remark oah} we have 
$$
|^Q\!h^{0}|^2_{L_2(\frZ_0)}
=\frac{|^{o}\!(I^{\eta}\varphi(X))|^2_{L_2(\frZ_0)}}{(^{o}\!\gamma)^2}
\leq \frac{^{o}\!\big(|I^{\eta}\varphi(X))|^2_{L_2(\frZ_0)}\big)}{(^{o}\!\gamma)^2} 
$$
$$
\leq N\frac{^{o}\!(K_0+K_2|Z|^2)}{(^{o}\!\gamma)^2}
=N\frac{K_0}{(^{o}\!\gamma)^2}
+NK_2\frac{^{o}\!(|X|^2)}{(^{o}\!\gamma)^2}+NK_2\frac{|Y|^2}{(^{o}\!\gamma)^2}
$$
with a constant $N$. Remember that since $\gamma=(\gamma)_{t\in[0,T]}$ 
is a (cadlag) $\cF_t$-martingale under $P$, the set $\{\gamma_{\tau}\}$ 
for $\cF_t$-stopping times $\tau\leq T$ is 
uniformly $P$-integrable and hence due to the right-continuity 
of $(\cF_t^Y)_{t\in [0,T]}$, the optional projection 
$^{o}\!\gamma$ 
is a cadlag process. Moreover, due to $\gamma>0$, 
we have $^{o}\!\gamma>0$ (a.s.). Since by \eqref{bound_Z}
$$
K_2\E(\sup_{t\leq T}{\bf1}_{\Omega_n}|X_t|^2) <\infty
\quad
\text{for every $n\geq1$}, 
$$
(and $(\cF_t^Y)_{t\in [0,T]}$ is right-continuous), the process 
$K_2{^{o}\!(|X|^2)}$ is a cadlag process. Consequently, 
$K_0/|^{o}\!\gamma|^2$, ${K_2}^{o}\!(|X|^2)/|^{o}\!\gamma|^2$ 
and 
$|Y|^2/|^{o}\!\gamma|^2$ are cadlag processes. Hence 
$$
\int_0^T \frac{1}{(^{o}\!\gamma)^2_s}\,ds
+K_2\int_0^T\frac{^{o}\!(|X|^2)_s}{(^{o}\!\gamma)^2_s}\,ds
+K_2\int_0^T\frac{|Y_s|^2}{(^{o}\!\gamma)_s^2}\,ds<\infty
\quad(\rm{a.s.}),   
$$
that proves 
$$
\int_0^T\int_{\frZ_i}|^Q\!h^{i}_s|^2\,\nu_i(d\frz)\,ds<\infty\quad(\rm{a.s.})
$$
for $i=0$, and we get this for $i=1$ in the same way. 
By the same argument we have 
$$
\int_0^T|^Q\!f^{(i)}_s|^2\,\,ds<\infty\quad(\rm{a.s.})
\quad
\text{for $i=0,1$}.
$$
Thus we can apply Corollary \ref{corollary projection} 
to the processes $\alpha$, $\alpha^i$, $\beta^i$ and $\delta^i$ (i=0,1), and 
then use Corollary \ref{corollary mp}, to get 
$$
\E_Q(\alpha_t|\cF_t^{Y})=\int_0^t\mu_s(\cL_s\varphi)\,ds, 
$$
$$
\E_Q(\alpha^0_t|\cF_t^{Y})
=\int_0^t\int_{\frZ_0}\mu_s(J_s^{\eta}\varphi)\,\nu_0(d\frz)ds,  
\quad
\E_Q(\alpha^1_t|\cF_t^{Y})
=\int_0^t\int_{\frZ_1}\mu_s(J_s^{\xi}\varphi)\,\nu_1(d\frz)ds, 
$$
$$
\E_Q(\beta^0_t|\cF_t^{Y})=0, 
\quad
\E_Q(\beta^1_t|\cF_t^{Y})=\int_0^t\mu_s(\cM^l_s\varphi)\,d\tilde V^l_s, 
$$
$$
\E_Q(\delta^0_t|\cF_t^{Y})=0, \quad
\E_Q(\delta^1_t|\cF_t^{Y})
=\int_0^t\int_{\frZ_1}\mu_s(I_s^{\xi}\varphi)\,\tilde{N}_1(d\frz,ds)
$$
for $t\in[0,T]$ and $\varphi\in C^2_b(\bR^d)$ almost surely, where $(\mu_t)_{t\in[0,T]}$ 
is an $\bM(\bR^d)$-valued $\cF^Y_t$-adapted weakly cadlag process such that 
$$
\mu_t(\varphi):=\int_{\bR^d}\varphi(x)\,\mu_t(dx)=\E_Q(\gamma^{-1}_t\varphi(X_t)|\cF^Y_t)\quad(\rm{a.s.})
\quad\text{for each $t\in[0,T],$}
$$
for every bounded Borel function $\varphi$ on $\bR^d$. 
Using Lemma \ref{lemma p1} 
with random variables $X:=\varphi(X_0)$, $Y:=1$ and $\sigma$-algebras 
$\cG_1:=\cF^Y_0$, $\cG:=\cF_0$ and 
$\cG_2:=\cF^{\tilde V}_t\vee \cF^{\tilde{N}_1}_t$ we get 
$$
\E_Q(\varphi(X_0)|\cF^Y_t)=\E_Q(\varphi(X_0)|\cF^Y_0)
=\mu_0(\varphi)\quad(\rm{a.s.}). 
$$
Consequently, taking the conditional expectation of both sides 
of equation \eqref{Ito} under $Q$ given $\cF^Y_t$,  
we see that equation \eqref{eqZ1} holds for each $t\in[0,T]$ 
and $\varphi\in C^2_b(\bR^d)$ almost surely, 
that implies that for each $\varphi\in C^2_b(\bR^d)$ 
equation \eqref{eqZ1} holds almost surely for all $t\in[0,T]$, 
since we have cadlag processes in both sides of equation 
\eqref{eqZ1} for each $\varphi\in C^2_b(\bR^d)$. 
To prove \eqref{eqZ2} first notice that for $\varphi:=\bf1$ 
equation \eqref{eqZ1} gives 
$$
d\mu_t({\bf1})=\mu_t(B^k_t)\,d\tilde V^k_t, \quad \mu_0({\bf1})=1.  
$$
Since $\mu_t({\bf1})=(^o\!\gamma_t)^{-1}P_t({\bf1})
=(^o\!\gamma_t)^{-1}$, $t\in[0,T]$, is a continuous process such that 
$\mu_t({\bf1})=\E_Q(\gamma^{-1}_t|\cF^Y_t)$ (a.s.) for each $t\in[0,T]$, 
it is the $\cF^Y_t$-optional projection under $Q$ 
of the positive process $(\gamma^{-1}_t)_{t\in[0,T]}$. 
Hence 
$\lambda_t:=\mu_t({\bf1})$, $t\in[0,T]$, 
is a positive process, and by It\^o's formula 
$$
d\lambda^{-1}_t=-\lambda^{-2}_t\mu_t(B^k_t)\,d\tilde V^k_t+\lambda^{-3}_t\sum_{k}\mu^2_t(B^k_t)\,dt. 
$$
By It\^o's formula for the product $P_t(\varphi)=\lambda^{-1}_t\mu_t(\varphi)$ we have 
$$
dP_t(\varphi)=P_t(\cL_t\varphi)\,dt+P_t(\cM^k_t\varphi)\,d\tilde V^k_t+\int_{\frZ_0}P_t(J^{\eta}_t\varphi)\,\nu_0(d\frz)dt
+\int_{\frZ_1}P_t(J^{\xi}_t\varphi)\,\nu_1(d\frz)dt
$$
$$
+\int_{\frZ_1}P_t(I^{\xi}_t\varphi)\,\tilde{N}_1(d\frz,dt)
+\lambda^{-3}_t\mu_t(\varphi)\sum_k\mu^2_t(B^k_t)\,dt
$$
$$
-\mu_t(\varphi)\lambda^{-2}_t\mu_t(B^k_t)\,d\tilde V^k_t-\lambda^{-2}_t\mu_t(B^k_t)\mu_t(\cM^k_t\varphi)\,dt
$$
Hence noting that 
$$
\lambda^{-3}_t\mu_t(\varphi)\sum_k\mu^2_t(B^k_t)=P_t(\varphi)\sum_kP^2_t(B_t^k), 
\quad
\mu_t(\varphi)\lambda^{-2}_t\mu_t(B^k_t)=P_t(\varphi)P_t(B^k_t)
$$
$$
\lambda^{-2}_t\mu_t(B^k_t)\mu_t(\cM^k_t\varphi)=P_t(B^k_t)P_t(\cM^k_t\varphi), 
$$
we obtain 
$$
dP_t(\varphi)=P_t(\cL_t\varphi)\,dt+\big(P_t(\cM^k\varphi)-P_t(\varphi)P_t(B^k_t)\big)\,d\tilde V^k_t
-\big(P_t(\cM^k\varphi)-P_t(\varphi)P_t(B^k_t)\big)P_t(B^k_t)\,dt
$$
$$
+\int_{\frZ_0}P_t(J^{\eta}_t\varphi)\,\nu_0(d\frz)dt
+\int_{\frZ_1}P_t(J^{\xi}_t\varphi)\,\nu_1(d\frz)dt
+\int_{\frZ_1}P_t(I^{\xi}_t\varphi)\,\tilde{N}_1(d\frz,dt).  
$$
Since clearly, 
$$
\big(P_t(\cM^k\varphi)-P_t(\varphi)P_t(B^k_t)\big)\,d\tilde V^k_t
-\big(P_t(\cM^k\varphi)-P_t(\varphi)P_t(B^k_t)\big)P_t(B^k_t)\,dt
$$
$$
=\big(P_t(\cM^k\varphi)-P_t(\varphi)P_t(B^k_t)\big)\,d\bar V^k_t
$$ 
with the process $(\bar V_t)_{t\in[0,T]}$, given by 
$d\bar V_t=d\tilde V_t-P_t(B_t)\,dt$,  $\bar V_0=0$, 
this gives equation \eqref{eqZ2},  and finishes the proof 
of Theorem \ref{theorem Z1}.


\begin{thebibliography}{mm}


\bibitem{A} D. Applebaum, {L\'{e}vy Processes and Stochastic Calculus}, 
Cambridge University Press, 2001.

\bibitem{A} D. Applebaum and S. Blackwood, 
{The Kalman-Bucy filter for integrable L\'evy 
processes with infinite second moment}, J. Appl. Prob. 52 (2015), 636-648.

\bibitem{B2014} S. Blackwood, {L\'evy Processes and Filtering
Theory}, Dissertation, The University of Sheffield, 2014.


\bibitem{CCC2014} T. Cass, M. Clark and D. Crisan, {The filtering equations revisited}, Stochastic Analysis and Applications, Springer, Cham, 2014. 129-162. (2014)

\bibitem{C2006} C. Ceci, {Risk minimizing hedging for a partially observed high frequency data model}, Stochastics 78, 13-31 (2006).

\bibitem{CC2012} C. Ceci and K. Colaneri, {Nonlinear filtering for jump diffusion observations},  Advances in Applied Probability 44-3, 678-701 (2012).

\bibitem{CC2014} C. Ceci and K. Colaneri, {The Zakai equation of nonlinear filtering for jump-diffusion observations: existence and uniqueness}, 
Applied Mathematics \& Optimization 69, 47-82 (2014). 

\bibitem{DC2014} D. Crisan, {The stochastic filtering problem: a brief historical account}, Journal of Applied Probability 51A, 13-22 (2014).

\bibitem{DM} C. Dellacherie and P.-A. Meyer, 
{Probabilities and Potential B: Theory of Martingales}, 
North-Holland Publishing Company (1982).

\bibitem{FH2018} B.P.W. Fernando and E. Hausenblas, 
{Nonlinear filtering with correlated L\'evy noise characterized by copulas}, 
Brazilian Journal of Probability and Statistics 32, 374-421 (2018).

\bibitem{FKK1972} M. Fujisaki,  G. Kallianpur and H. Kunita, 
{Stochastic differential equations for the nonlinear filtering problem}, Osaka Journal Math., 
9-1, 19-40 (1972).

\bibitem{GM2011} B. Grigelionis and R. Mikulevicius, 
{Nonlinear filtering equations for stochastic processes with jumps}, 
in: The Oxford Handbook of Nonlinear Filtering, Oxford University Press (2011).

\bibitem{HL2008}A. J. Heunis and V.  M. Lucic, 
{On the Innovations Conjecture of Nonlinear Filtering with
Dependent Data}, 
Electronic Journal of Probability, Vol. 13, 2190--2216 (2008).

\bibitem{HWY} S. He, J. Wang and J. Yan, {Semimartingale Theory and Stochastic Calculus}, 
Taylor \& Francis, 1992.

\bibitem{IW} Ikeda and Watanabe, {Stochastic Differential Equations and Diffusion Processes}, NorthHolland Publishing Company (1992).

\bibitem{KZ2000} N.V. Krylov and A. Zatezalo, {A direct approach to deriving filtering equations for diffusion processes}, Appl. Math. Optim., 42, no. 3, 315-332 (2000). 

\bibitem{Krylov1979} N.V. Krylov, {On the equivalence of $\sigma$-algebras in the filtering problem of diffusion processes}, Theor. Probab. Appl.,  v.24,  772--781 (1979). 

\bibitem{K} N.V. Krylov, {Introduction to the Theory of Random Processes}, 
American Mathematical Society: Graduate Studies in Mathematics 43 (2002). 

\bibitem{K2002} N.V. Krylov, {A simple proof of a result of A. Novikov}, arXiv:math/0207013v1, 2002. 
 

\bibitem{K2019} N. V. Krylov,  {A few comments on a result of A. Novikov and Girsanov's
theorem}, Stochastics, vol. 91.8, 1186-1189 (2019).  

\bibitem{LS1974} R. S. Liptser and A. N. Shiryayev, {Statistics of Random Processes I.}, 2nd edition, Springer, 2001.

\bibitem{P2005} D.R. Poklukar, {Nonlinear filtering for jump-diffusions}, Journal of Computational and Applied Mathematics 197, 558-567 (2006).

\bibitem{SS2009}S. Popa and S.S. Sritharan, {Nonlinear filtering of It\^o-L\'evy stochastic differential equations with continuous observations}, Communications on Stochastic Analysis 3 (2009).

\bibitem{QD2015} H. Qiao and J. Duan, {Nonlinear filtering of stochastic dynamical systems with {L}\'evy noises}, Advances in Applied Probability 47-3, 902-918 (2015). 

\bibitem{Q2019} H. Qiao, {Nonlinear filtering of stochastic differential equations driven by correlated L\'{e}vy noises}, Stochastics, vol. 93.8,  1156-1185 (2021).

\bibitem{Y} M. Yor, {Sur les th\'eories du filtrage et de la pr\'ediction}, S\'eminaire de probabilit\'es XI, 257-297, Springer (1977).


\end{thebibliography}
\end{document}